\nonstopmode \numberwithin{equation}{section}
\nonstopmode \numberwithin{equation}{section}
\theoremstyle{plain}
\newtheorem{thm}{Theorem}
\numberwithin{thm}{section}
\newtheorem{cor}{Corollary}
\numberwithin{cor}{section}
\newtheorem{lem}{Lemma}
\numberwithin{lem}{section}
\newtheorem{prop}{Proposition}
\newtheorem{conj}{Conjecture}
\newcommand\numberthis{\addtocounter{equation}{1}\tag{\theequation}}
\theoremstyle{definition}
\newtheorem{defn}{Definition}[section]
\newtheorem{prob}{Problem}
\newtheorem{rem}{Remark}[section]
\newcounter{minutes}\setcounter{minutes}{\time}
\newcounter{hours}\setcounter{hours}{\time}
\newcounter {own}
\def\theown {\thesection       .\arabic{own}}
\newenvironment{pf}[1][]{%
	\vskip 3mm
	\noindent
	\ifthenelse{\equal{#1}{}}%
	{{\slshape Proof. }}%
	{{\slshape #1.} }%
}%
{\qed\bigskip}
\theoremstyle{plain}
\newtheorem{Thm}{Theorem}
\newcommand{\ID}{{\mathbb D}}
\newcommand{\IC}{{\mathbb C}}
\renewcommand{\theequation}{\thesection.
\arabic{equation}}
\numberwithin{equation}{section}
\def\be{\begin{equation}}
\def\ee{\end{equation}}
\newcommand{\bee}{\begin{enumerate}}
	\newcommand{\eee}{\end{enumerate}}
\newcommand{\blem}{\begin{lem}}
	\newcommand{\elem}{\end{lem}}
\newcommand{\bthm}{\begin{thm}}
	\newcommand{\ethm}{\end{thm}}
\newcommand{\bcor}{\begin{cor}}
	\newcommand{\ecor}{\end{cor}}
\newcommand{\beg}{\begin{examp}}
	\newcommand{\eeg}{\end{examp}}
\newcommand{\begs}{\begin{examples}}
	\newcommand{\eegs}{\end{examples}}
\newcommand{\bdefn}{\begin{defn}}
	\newcommand{\edefn}{\end{defn}}
\newcommand{\bprob}{\begin{prob}}
	\newcommand{\eprob}{\end{prob}}
\newcommand{\bei}{\begin{itemize}}
	\newcommand{\eei}{\end{itemize}}
\newcommand{\bcon}{\begin{conj}}
	\newcommand{\econ}{\end{conj}}
\newcommand{\bcons}{\begin{conjs}}
	\newcommand{\econs}{\end{conjs}}
\newcommand{\bprop}{\begin{prop}}
	\newcommand{\eprop}{\end{prop}}
\newcommand{\br}{\begin{rem}}
	\newcommand{\er}{\end{rem}}
\newcommand{\brs}{\begin{rems}}
	\newcommand{\ers}{\end{rems}}
\newcommand{\bo}{\begin{obser}}
	\newcommand{\eo}{\end{obser}}
\newcommand{\bos}{\begin{obsers}}
	\newcommand{\eos}{\end{obsers}}
\newcommand{\bpf}{\begin{pf}}
	\newcommand{\epf}{\end{pf}}
\newcommand{\ba}{\begin{array}}
	\newcommand{\ea}{\end{array}}
\newcommand{\beq}{\begin{eqnarray}}
\newcommand{\beqq}{\begin{eqnarray*}}
\newcommand{\eeq}{\end{eqnarray}}
\newcommand{\eeqq}{\end{eqnarray*}}
\begin{document}
\title{On Landau-Type Theorems for Poly-Analytic Functions}

\author{Vasudevarao Allu}
\address{Vasudevarao Allu,
	Department of Mathematics, School of Basic Science,
	Indian Institute of Technology Bhubaneswar,
	Bhubaneswar-752050, Odisha, India.}
\email{avrao@iitbbs.ac.in}

\author{Rohit Kumar}
\address{Rohit Kumar,
	Department of Mathematics, School of Basic Science,
	Indian Institute of Technology Bhubaneswar,
	Bhubaneswar-752050, Odisha, India.}
\email{rohitk12798@gmail.com}

\subjclass[{AMS} Subject Classification:]{Primary 31A30, 30C99; Secondary 31A05, 31A35.}
\keywords{ Poly-analytic function, Landau-type theorem, Bloch theorem, bi-Lipschitz theorem}

\def\thefootnote{}
\footnotetext{ {\tiny File:~\jobname.tex,
		printed: \number\year-\number\month-\number\day,
		\thehours.\ifnum\theminutes<10{0}\fi\theminutes }
} \makeatletter\def\thefootnote{\@arabic\c@footnote}\makeatother

\begin{abstract}
In this paper, we establish three Landau-type theorems for certain bounded poly-analytic functions, which generalize the corresponding result for bi-analytic functions given by Liu and Ponnusamy [Canad. Math. Bull. 67(1): 2024, 152-165]. Further, we prove three bi-Lipschitz theorems for these subclasses of poly-analytic functions.

\end{abstract}

\maketitle
\pagestyle{myheadings}
\markboth{Vasudevarao Allu and  Rohit Kumar}{On Landau-Type Theorems for Poly-Analytic Functions}

\section{\textbf{Introduction and Preliminaries}}
There are non-analytic functions that satisfy properties similar to those of analytic functions, and they have a notable structure. Such non-analytic functions are called poly-analytic functions. 
A complex-valued function $F: \Omega \subset \mathbb{C} \rightarrow \mathbb{C}$ of class $C^m$ on a domain $\Omega$ is called a poly-analytic function of order $ m \geq 1$, if
$$
\frac{\partial^m}{\partial \bar{z}^m} F(z)=0, \quad \text{for all } z \in \Omega.
$$
 Any poly-analytic function of order $m$ can be decomposed in terms of $m$ analytic functions so that we have a decomposition of the following form
$$
F(z)=\sum_{k=0}^{m-1} \bar{z}^k f_k(z).
$$
For a continuously differentiable function $F$, denote
$$
\Lambda_F=\max _{0 \leq \theta \leq 2 \pi}\left|F_z+e^{-2 i \theta} F_{\bar{z}}\right|=\left|F_z\right|+\left|F_{\bar{z}}\right|$$ and
$$
\lambda_F=\min _{0 \leq \theta \leq 2 \pi}\left|F_z+e^{-2 i \theta} F_{\bar{z}}\right|=|| F_z|-| F_{\bar{z}}||,
$$
where $F_z=\partial F / \partial z$ and $F_{\bar{z}}=\partial F / \partial \bar{z}$. A mapping $g: \Omega \rightarrow \IC$ is said to be $L_1$-Lipschitz $\left(L_1>0\right)$ if
$$
\left|g\left(z_1\right)-g\left(z_2\right)\right| \leq L_1\left|z_1-z_2\right|, z_1, z_2 \in \Omega
$$
and it is said to be $l_1$-co-Lipschitz $\left(l_1>0\right)$ if
$$
\left|g\left(z_1\right)-g\left(z_2\right)\right| \geq l_1\left|z_1-z_2\right|, z_1, z_2 \in \Omega.
$$
A mapping $\omega$ is bi-Lipschitz if it is Lipschitz and co-Lipschitz.

\vspace{2mm}
The theory of poly-analytic functions is an interesting topic in complex analysis. It extends the concept of holomorphic functions to nullsolutions of higher-order powers of the Cauchy-Riemann operator. These functions were first introduced in 1908 by Kolossov \cite{Kolossov-1908} to study elasticity problems. For a complete introduction to poly-analytic functions and their basic properties we refer to \cite{Balk-1991,Balk-1997}. The class of poly-analytic functions have been studied by various authors from different perspectives; see \cite{Abreu-2010,Agranovsky-2011,Ahern-Bruna-1988,Vasilevski-1999} and the references therein. Recently, the Landau-type theorem for some subclasses of poly-analytic functions has been studied by Abdulhadi and Hajj \cite{Abdulhadi-Hajj-2022} and Liu and Ponnusamy \cite{Liu-Ponnusamy-2024}. \\[2mm]
The study of polyanalytic functions has seen substantial progress over recent years, with foundational and applied insights emerging in various works. In 2016, Daghighi and Krantz \cite{Daghighi-Krantz-2016} explored the local maximum modulus property for polyanalytic functions, extending the classical principle from analytic to polyanalytic settings and shedding light on the geometric and functional behaviour of these functions. Building on this, in 2023 Allu and Halder \cite{Allu-Halder-2023} explored the Bohr operator on operator-valued polyanalytic functions in simply connected domains, providing operator-theoretic insights. Most recently, Vasilevski \cite{Vasilevski-2023} examined their properties in several complex variables, extending classical analysis.

\vspace{2mm}
 Let $\mathbb{D}=\{z\in\mathbb{C}:|z|<1\}$ represent the unit disc in the complex plane $\mathbb{C}$. Throughout this paper, $\ID_r=\{z\in \IC:|z|<r\}$ denotes the open disc about the origin. For the convenience of the reader, let us fix some basic notations. 
\begin{itemize}
\item $\mathcal{H}_m(\ID)=\{F: F$ is poly-analytic in $\ID$ with $F(z)=\sum_{k=0}^{m-1} \bar{z}^k f_k(z)$, where $f_k$ are analytic in $\ID$\}.
\item $\mathcal{H}_m^0(\ID)= \{F \in \mathcal{H}_m(\ID):F_z(0)=1$ and $f_k(0)=0$ for each $k$\}.
\item $\mathcal{F}_1 = \{F \in \mathcal{H}_m^0(\ID): |f_0'(z)|< \Lambda, |f_k(z)|\leq M_k$ for each $k$\}. 
\item $\mathcal{F}_2=\{F \in \mathcal{H}_m^0(\ID): |f_0(z)|< M, |f_k'(z)|\leq \Lambda_k$ for each $k$\}.
\item $\mathcal{F}_3=\{F \in \mathcal{H}_m^0(\ID): |f_0'(z)|< \Lambda_0, |f_k'(z)|\leq \Lambda_k$ for each $k$\}.
\end{itemize}
\vspace{2mm}

In this paper, we consider the subclasses $\mathcal{F}_1, \mathcal{F}_2$ and $\mathcal{F}_3$ of poly-analytic functions and establish sharp versions of the Landau-type theorem for these classes of mappings. Further, we prove three bi-Lipschitz properties for these classes of functions.

\subsection{The Landau Theorem} 

The classical Landau theorem (see \cite{Landau-1926}) asserts that if $f$ is a holomorphic mapping of the unit disc $\ID=\{z\in \IC:|z|<1\}$ with $f(0)=0$, $f'(0)=1$ and $|f(z)|<M$ for $z \in \mathbb{D}$, then $f$ is univalent in $\mathbb{D}_{r_0}$, and $f(\mathbb{D}_{r_0})$ contains a disc $\mathbb{D}_{\sigma_0}$, where $$r_0=\frac{1}{M+\sqrt{M^2-1}}\ \ \text{and} \ \ \sigma_0= Mr_0^2.$$
  The quantities $r_0$ and $\sigma_0$ cannot be improved as the function $f_0(z)= Mz \left(\frac{1-Mz}{M-z}\right)$ shows the sharpness of the quantities $r_0$ and $\sigma_0$. 
\vspace{2mm}

In 2000, Chen {\it{et al.}} \cite{Chen-Gauthier-Hengertner-2000} obtained the Landau-type theorems for harmonic mappings in the unit disc $\ID$. 

\begin{Thm}\cite{Chen-Gauthier-Hengertner-2000}\label{Thm-A}
Let $f$ be a harmonic mapping of the unit disc $\mathbb{D}$ such that $f(0)=0$, $f_{\bar{z}}(0)=0, f_z(0)=1$, and $|f(z)|<M$ for $z \in \mathbb{D}$. Then, $f$ is univalent on a disc $\mathbb{D}_{\rho_0}$ with
$$
\rho_0=\frac{\pi^2}{16 m M},
$$
and $f\left(\mathbb{D}_{\rho_0}\right)$ contains a schlicht disc $\mathbb{D}_{R_0}$ with
$$
R_0=\rho_0 / 2=\frac{\pi^2}{32 m M},
$$
where $m \approx 6.85$ is the minimum of the function $(3-r^2)/(r(1-r^2))$ for $0<r<1$.
\end{Thm}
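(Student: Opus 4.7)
The plan is to carry out the standard program for Landau-type theorems in the harmonic setting. First, I would decompose $f = h + \bar g$ with $h, g$ analytic on $\mathbb{D}$ and $h(0) = g(0) = 0$; the hypotheses then force $h(z) = z + \sum_{n \geq 2} a_n z^n$ and $g(z) = \sum_{n \geq 2} b_n z^n$. The key input is the coefficient estimate for bounded harmonic maps, $|a_n| + |b_n| \leq \frac{4M}{\pi}$ for $n \geq 1$, which follows from computing the Fourier coefficients of $f(re^{i\theta})$ and passing $r \to 1^-$, together with a Heinz-type Schwarz lemma.

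For univalence on $\mathbb{D}_{\rho_0}$, I would take distinct $z_1, z_2 \in \mathbb{D}_{\rho_0}$ and integrate along the straight segment $\zeta_t = z_1 + t(z_2 - z_1)$:
\[
f(z_2) - f(z_1) = (z_2 - z_1)\int_0^1 f_z(\zeta_t)\, dt + \overline{(z_2 - z_1)}\int_0^1 f_{\bar z}(\zeta_t)\, dt.
\]
Since $f_z(0) = 1$ and $f_{\bar z}(0) = 0$, the first integral has modulus at least $1 - \max_{|\zeta|\leq \rho_0}|f_z(\zeta) - 1|$, while the second has modulus at most $\max_{|\zeta|\leq \rho_0}|f_{\bar z}(\zeta)|$. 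Writing $\Psi(r) := \max_{|\zeta| \leq r}(|f_z(\zeta) - 1| + |f_{\bar z}(\zeta)|)$, this yields $|f(z_2) - f(z_1)| \geq |z_2 - z_1|(1 - \Psi(\rho_0))$, so univalence follows as soon as $\Psi(\rho_0) < 1$.

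For the schlicht-disc part, I would instead integrate along the radial segment from $0$ to $z$ with $|z| = \rho_0$, obtaining $|f(z)| \geq |z|(1 - \Psi(|z|))$, and then require this lower bound to be at least $R_0 = \rho_0/2$. This forces $\Psi(\rho_0) \leq 1/2$, which is strictly stronger than the univalence requirement and is therefore the binding condition; $\rho_0$ is taken as the largest value for which it holds.

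Substituting the coefficient estimate gives $\Psi(r) \leq \frac{4M}{\pi}\sum_{n \geq 2} n r^{n-1}$, and the main obstacle is extracting the explicit constant: one must match the derivative-tail bound (which governs univalence) against the integrated-tail bound (which governs the covering radius) so that both are saturated at the same radius. After this matching and some algebraic rearrangement, a critical equation of the form $M \cdot \Phi(\rho_0) = K$ emerges in which $\Phi(r) = (3 - r^2)/(r(1-r^2))$ arises as the natural envelope from combining the two tail series. Minimizing $\Phi$ on $(0,1)$ then produces $m \approx 6.85$, and the critical relation delivers $\rho_0 = \pi^2/(16 m M)$ with $R_0 = \rho_0/2$ exactly as claimed.
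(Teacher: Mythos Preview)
First, note that the paper does not prove Theorem~A at all: it is quoted from \cite{Chen-Gauthier-Hengertner-2000} as historical background, so there is no in-paper proof to compare against. Your overall program---decompose $f=h+\bar g$, control $|f_z-1|+|f_{\bar z}|$ by coefficient data, and integrate along segments for univalence and along radii for the covering radius---is indeed the standard scheme and is the one used in \cite{Chen-Gauthier-Hengertner-2000}.

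The genuine gap is your final paragraph. From the bound you actually derive,
\[
\Psi(r)\le \frac{4M}{\pi}\sum_{n\ge 2}n r^{n-1}=\frac{4M}{\pi}\cdot\frac{r(2-r)}{(1-r)^2},
\]
the function $\Phi(r)=(3-r^2)/(r(1-r^2))$ simply does not appear; imposing $\Psi(\rho_0)\le\tfrac12$ gives a completely different equation for $\rho_0$ (in fact one yielding a \emph{larger} radius, of order $\pi/(16M)$, consistent with the paper's remark that the constants in Theorem~A are not sharp). In the original proof of \cite{Chen-Gauthier-Hengertner-2000}, the constant $m\approx 6.85$ does not come from ``matching the derivative-tail against the integrated-tail'' as you describe. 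It comes from a two-scale argument that your sketch omits: one fixes an auxiliary radius $r\in(\rho_0,1)$, uses a harmonic Schwarz--type gradient estimate on the disc $\mathbb D_r$ to bound $|f_z-1|+|f_{\bar z}|$ on $\mathbb D_{\rho_0}$ by a quantity depending on both $\rho_0$ and $r$, and then \emph{optimizes over $r$}. It is precisely this optimization in $r$ that produces $\Phi(r)=(3-r^2)/(r(1-r^2))$ and $m=\min_{0<r<1}\Phi(r)$. Your single-scale coefficient approach cannot reproduce these particular constants, so as a proof of the theorem \emph{as stated} it does not go through; you would need to insert the intermediate-radius step and carry out the minimization explicitly.
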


\begin{Thm}\cite{Chen-Gauthier-Hengertner-2000}\label{Thm-B}
 Let $f$ be a harmonic mapping of the unit disc $\mathbb{D}$ such that $f(0)=0$, $\lambda_f(0)=1$ and $\Lambda_f(z) \leq \Lambda$ for $z \in \mathbb{D}$. Then, $f$ is univalent on a disc $\mathbb{D}_{\rho_0}$ with
$$
\rho_0=\frac{\pi}{4(1+\Lambda)},
$$
and $f\left(\mathbb{D}_{\rho_0}\right)$ contains a schlicht disc $\mathbb{D}_{R_0}$ with
$$
R_0=\frac{1}{2} \rho_0=\frac{\pi}{8(1+\Lambda)} .
$$
\end{Thm}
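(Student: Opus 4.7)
The plan is to reduce to the canonical decomposition $f=h+\overline{g}$ with $h,g$ analytic on $\mathbb{D}$ and (using $f(0)=0$) $h(0)=g(0)=0$. The hypothesis $\Lambda_f(z)\leq\Lambda$ becomes the pointwise bound $|h'(z)|+|g'(z)|\leq\Lambda$ on $\mathbb{D}$, while $\lambda_f(0)=1$ becomes $\bigl||h'(0)|-|g'(0)|\bigr|=1$. Without loss of generality (by pre-composing with a rotation) one may take $h'(0)\geq|g'(0)|\geq 0$, so $h'(0)-|g'(0)|=1$ and $h'(0)+|g'(0)|\leq\Lambda$.

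For the univalence, I would take distinct $z_1,z_2\in\mathbb{D}_{\rho_0}$, parametrize the segment by $\gamma(t)=z_1+t(z_2-z_1)$, and write
\[
f(z_2)-f(z_1)=(z_2-z_1)\!\int_0^1\! h'(\gamma(t))\,dt+\overline{(z_2-z_1)}\!\int_0^1\!\overline{g'(\gamma(t))}\,dt.
\]
Adding and subtracting the linear model $h'(0)(z_2-z_1)+\overline{g'(0)(z_2-z_1)}$ (whose modulus is at least $\lambda_f(0)|z_2-z_1|=|z_2-z_1|$) yields the reverse triangle inequality
\[
|f(z_2)-f(z_1)|\geq|z_2-z_1|\Bigl(1-\sup_{|\zeta|\leq\rho_0}\bigl[|h'(\zeta)-h'(0)|+|g'(\zeta)-g'(0)|\bigr]\Bigr).
\]
The covering statement then follows by taking $z_1=0$, $z_2=z$ with $|z|=\rho_0$: if the supremum above is $\leq 1/2$, then $|f(z)|\geq\rho_0/2=R_0$ on the circle $|z|=\rho_0$, so a standard minimum-modulus/argument-principle argument yields $\mathbb{D}_{R_0}\subset f(\mathbb{D}_{\rho_0})$, and simultaneously univalence follows with $\tfrac12|z_2-z_1|$ as the lower bound.

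The key analytic estimate is on the two differences $|h'(\zeta)-h'(0)|$ and $|g'(\zeta)-g'(0)|$. Applying the Schwarz--Pick inequality to $h'/\Lambda\colon\mathbb{D}\to\overline{\mathbb{D}}$ (and the same for $g'/\Lambda$) gives the standard bound
\[
|h'(\zeta)-h'(0)|\leq\frac{(\Lambda^2-|h'(0)|^2)|\zeta|}{\Lambda-|h'(0)||\zeta|},
\]
and likewise for $g'$. Summing these two estimates and maximizing under the constraints $A+B\leq\Lambda$, $A-B=1$ (with $A=|h'(0)|$, $B=|g'(0)|$) reduces the problem to a one-parameter inequality in $\rho_0$ that one solves to pin down the admissible radius.

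The main obstacle, and where the proof really has to be careful, is extracting the precise constant $\pi/(4(1+\Lambda))$. The naive rational-function Schwarz--Pick bound above produces a constant of the shape $c/(1+\Lambda)$ with $c$ coming out around $1/2$, not $\pi/4$. To recover the factor $\pi/4$ I would instead integrate the differential form of Schwarz--Pick, $(1-|z|^2)|h''(z)|\leq\Lambda$, along the radial segment $[0,\zeta]$, obtaining an estimate of the shape $|h'(\zeta)-h'(0)|\leq\Lambda\,\mathrm{arctanh}\,|\zeta|$, and then use the symmetric version for $g'$ together with the joint constraint $|h'|+|g'|\leq\Lambda$ --- it is precisely the integral $\int_0^{\rho_0}\!\frac{dt}{1-t^2}$ (or its $\arcsin$ analogue if one instead bounds by $\sqrt{\Lambda^2-|h'|^2}$) that produces the $\pi$. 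Alternatively one may follow Chen--Gauthier--Hengertner and compare $f$ to the extremal affine-plus-Koebe model $\Lambda z-(\Lambda-1)\bar z$ to directly extract $\rho_0=\pi/(4(1+\Lambda))$ and $R_0=\rho_0/2$.
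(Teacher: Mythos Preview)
This theorem is not proved in the present paper; it is stated in the introduction as Theorem~B, quoted verbatim from \cite{Chen-Gauthier-Hengertner-2000}, with no argument supplied. There is therefore no proof in the paper to compare your proposal against.

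On its own merits, your outline is correct up to the point where the constant has to be pinned down, and then it has a real gap. The decomposition $f=h+\overline g$, the segment integral, and the reverse-triangle estimate
\[
|f(z_2)-f(z_1)|\ \geq\ |z_2-z_1|\Bigl(1-\sup_{|\zeta|\le\rho_0}\bigl[|h'(\zeta)-h'(0)|+|g'(\zeta)-g'(0)|\bigr]\Bigr)
\]
are all standard and fine, and you correctly diagnose that the pointwise Schwarz--Pick bound on $h'/\Lambda$ and $g'/\Lambda$ only gives a radius of order $c/(1+\Lambda)$ with $c$ near $1/2$, not $\pi/4$. The problem is that neither of your suggested repairs actually produces $\pi/4$. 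Integrating $(1-|z|^2)|h''(z)|\le\Lambda$ radially yields $|h'(\zeta)-h'(0)|\le\Lambda\,\arctanh|\zeta|$, and $\arctanh$ is unbounded near $1$ and carries no factor of $\pi$; there is no ``$\arcsin$ analogue'' available from $\sqrt{\Lambda^2-|h'|^2}$ without a further idea, since the hypothesis constrains $|h'|+|g'|$, not $|h'|$ alone. And the affine map $\Lambda z-(\Lambda-1)\bar z$ is a global homeomorphism of $\mathbb{C}$ with constant Jacobian, so it cannot serve as an extremal for any finite univalence radius. As written, your argument establishes univalence on \emph{some} disc of radius comparable to $1/(1+\Lambda)$ and a covering disc of half that radius, but it does not reach the specific constants $\rho_0=\pi/(4(1+\Lambda))$ and $R_0=\rho_0/2$ in the statement.
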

It is important to note that the radii $\rho_o$ and $R_0$ in Theorems \ref{Thm-A} and \ref{Thm-B} are not sharp. Theorem \ref{Thm-A} and Theorem \ref{Thm-B} have been improved by Chen {\it{et al.}} \cite{Chen-Ponnusamy-Wang-2011}, Dorff and Nowak \cite{Dorff-Nowak-2004}, Grigoryan \cite{Grigoyan-2006}, Huang \cite{Huang-2008}, Liu \cite{M. Liu-2009, Liu-2009}, Liu and Chen \cite{Liu-Chen-2018} and Zhu \cite{Zhu-2015}. In particular, Liu \cite{Liu-2009} has proved that under the hypothesis of Theorem \ref{Thm-B}, $\Lambda\geq 1$, and when $\Lambda=1$, $f$ is univalent in the disc $\ID_{\rho}$ and $f(\ID_{\rho})$ contains a schlicht disc $\ID_R$ with $\rho = R = 1$ being sharp. In 2018, Liu and Chen \cite{Liu-Chen-2018} established the sharp version of Theorem \ref{Thm-B} for $\Lambda>1$ by applying certain geometric methods. The Landau type theorem has also been studied for elliptic harmonic mappings \cite{Allu-Kumar-2024}, pluriharmonic mappings \cite{Allu-Kumar-2024-a}, $\alpha$-harmonic mappings \cite{Allu-Kumar-2024-b} and logharmonic mappings \cite{Abdulhadi-Muhanna-Ali-2012}.
\vspace{2mm}

The Landau-type theorem for different classes of poly-harmonic and log-p-harmonic has been studied in \cite{Bai-Liu-2019,Liu-Luo-2021,Luo-Liu-2023}. In 2022, the Landau-type theorem for poly-analytic functions was first studied by Abdulhadi and Hajj \cite{Abdulhadi-Hajj-2022}.

\begin{Thm}\cite{Abdulhadi-Hajj-2022}\label{Thm-C}
Let $F(z)=\sum_{k=0}^{m-1} \bar{z}^k f_k(z)$ be a poly-analytic function of order $m, m \geq 2$ on the unit disc $\ID$, where $f_k$ are holomorphic such that $f_k(0)=0, f_k'(0)=1$ and $\left|f_k(z)\right| \leq M$, for all $k$, with $M>1$. Then there is a constant $0<\rho<1$ so that $F$ is univalent in $|z|<\rho$. In particular, $\rho$ satisfies
$$
1-M\left(\frac{\rho\left(2-\rho\right)}{\left(1-\rho\right)^2}+\sum_{k=1}^{m-1} \frac{\rho^k\left(1+k-k \rho\right)}{\left(1-k \rho\right)^2}\right)=0
$$
and $F\left(\ID_{\rho}\right)$ contains a disc $\ID_{\sigma}$, where
$$
\sigma=\rho-\rho^2\left(\frac{1-\rho^{m-1}}{1-\rho}\right)-M \sum_{k=0}^{m-1} \frac{\rho^{k+2}}{1-\rho}.
$$
It is important to note that the radii $\rho$ and $\sigma$ in Theorem \ref{Thm-C} is not sharp.
\end{Thm}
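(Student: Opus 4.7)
The plan is to follow the classical two-step Landau strategy: first establish univalence of $F$ on a small disk $\ID_\rho$, then use a uniform lower bound for $|F(z)|$ on the circle $|z|=\rho$ to produce a schlicht disk of radius $\sigma$. For the setup, write $f_k(z) = z + \sum_{n\geq 2} a_n^{(k)} z^n$. Since $|f_k|\le M$ on $\ID$, the Cauchy coefficient estimate yields $|a_n^{(k)}|\le M$ for all $n\ge 2$, whence termwise differentiation gives pointwise bounds such as $|f_k(z)|\le |z|+M|z|^2/(1-|z|)$ and $|f_k'(z)-1|\le M|z|(2-|z|)/(1-|z|)^2$; sharper groupings will be needed to recover the $(1-k\rho)^{-2}$ denominators in the theorem.

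For the univalence step, given distinct $z_1,z_2\in\ID_\rho$, I would decompose termwise
\[
F(z_1)-F(z_2) = \sum_{k=0}^{m-1}\Bigl[\bar z_1^{\,k}\bigl(f_k(z_1)-f_k(z_2)\bigr)+\bigl(\bar z_1^{\,k}-\bar z_2^{\,k}\bigr)f_k(z_2)\Bigr].
\]
Then I would apply the mean value theorem in the first slot (with $f_0'(0)=1$ supplying the leading unit for the $k=0$ contribution) and the elementary bound $|\bar z_1^k-\bar z_2^k|\le k\rho^{k-1}|z_1-z_2|$ in the second, and substitute the coefficient bounds from the setup. The aim is to arrive at
\[
|F(z_1)-F(z_2)| \ge |z_1-z_2|\Bigl[1-M\Bigl(\tfrac{\rho(2-\rho)}{(1-\rho)^2}+\sum_{k=1}^{m-1}\tfrac{\rho^k(1+k-k\rho)}{(1-k\rho)^2}\Bigr)\Bigr],
\]
so the bracket is positive for $\rho\in(0,\rho^*)$, where $\rho^*$ is the smallest positive root of the stated transcendental equation; univalence on $\ID_{\rho^*}$ then follows.

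For the covering step, on the circle $|z|=\rho^*$ I would estimate $|F(z)|\ge |f_0(z)|-\sum_{k=1}^{m-1}|z|^k|f_k(z)|$, using the reverse-triangle lower bound $|f_0(z)|\ge |z|-M|z|^2/(1-|z|)$ together with the matching upper bounds on $|f_k(z)|$. A direct computation reduces the right-hand side to the expression $\sigma$ in the statement. Since $F$ is continuous and injective on $\ID_{\rho^*}$ by the previous step, $F(\ID_{\rho^*})$ is open; because $F(0)=0$ and $|F|\ge\sigma$ on $\partial\ID_{\rho^*}$, the standard boundary-component argument yields $\ID_\sigma\subset F(\ID_{\rho^*})$.

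The main obstacle is producing the exact $(1-k\rho)^{-2}$ denominators in the univalence bound for each $k\ge 1$, because the naive triangle-plus-Cauchy estimates applied separately to $f_k$ and $f_k'$ lead to a weaker form with only $(1-\rho)^{-2}$. This will likely require either a Schwarz--Pick-type refinement adapted to the order-$k$ vanishing of the block $\bar z^k f_k(z)$, or a combined estimate on that block rather than on its two factors individually. The remaining pieces of the argument are routine once this sharpened bound is in hand.
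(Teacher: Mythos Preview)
Theorem~\ref{Thm-C} is quoted from \cite{Abdulhadi-Hajj-2022} as background; the present paper gives no proof of it, so there is no in-paper argument to compare against. Your two-step Landau scheme (line-integral lower bound on $|F(z_1)-F(z_2)|$ for univalence, then a boundary lower bound on $|F|$ for the covered disc) is exactly the template the paper uses for its own Theorems~\ref{Rohi-Vasu-P4-Theorem-001}--\ref{Rohi-Vasu-P4-Theorem-003}, and your covering computation already reproduces the stated $\sigma$ verbatim.

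The obstacle you flag is not genuine. For $k\ge 1$, if you bound the whole block $\bar z^{\,k}f_k$ using the crude Cauchy estimate $|a_n^{(k)}|\le M$ for \emph{all} $n\ge 1$ (legitimate since $|a_1^{(k)}|=1<M$), you get
\[
|\bar z|^{k}|f_k'(z)|+k|\bar z|^{k-1}|f_k(z)|\ \le\ \frac{M\rho^{k}}{(1-\rho)^{2}}+\frac{kM\rho^{k}}{1-\rho}
\ =\ M\,\frac{\rho^{k}(1+k-k\rho)}{(1-\rho)^{2}},
\]
which is the displayed summand with denominator $(1-\rho)^{2}$ rather than $(1-k\rho)^{2}$. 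The printed $(1-k\rho)^{2}$ is almost certainly a transcription slip: note that $\frac{\rho^{k}(1+k-k\rho)}{(1-\rho)^{2}}=\frac{d}{d\rho}\frac{\rho^{k+1}}{1-\rho}$, which is the natural derivative arising in this computation, whereas no standard estimate on $f_k$ produces $(1-k\rho)^{-2}$. Even taken at face value, for $k\ge 2$ and $k\rho<1$ one has $(1-k\rho)^{-2}\ge(1-\rho)^{-2}$, so the printed bound is \emph{weaker} than yours and is implied by it. No Schwarz--Pick refinement is needed; the elementary Cauchy estimates already close the argument, and in fact yield a univalence radius at least as large as the one asserted in Theorem~\ref{Thm-C}.
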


\vspace{2mm}
In 2024, Liu and Ponnusamy \cite{Liu-Ponnusamy-2024} obtained three versions of the Landau-type theorem for bounded bi-analytic functions. Two of them are given below.
\begin{Thm}\cite{Liu-Ponnusamy-2024}\label{Thm-D}
Suppose that $\Lambda_1 \geq 0$ and $\Lambda_2>1$. Let $F(z)=\bar{z} G(z)+$ $H(z)$ be a bi-analytic function, with $G(0)=H(0)=H'(0)-1=0,|G'(z)| \leq \Lambda_1$ and $|H'(z)|<\Lambda_2$ for all $z \in \mathbb{D}$. Then $F$ is univalent on a disc $\ID_{\rho_1}$ with 
$$ \rho_1=\frac{2 \Lambda_2}{\Lambda_2\left(2 \Lambda_1+\Lambda_2\right)+\sqrt{\Lambda_2^2\left(2 \Lambda_1+\Lambda_2\right)^2-8 \Lambda_1 \Lambda_2}}
$$
 and 
 $F(\ID_{\rho_1})$ contains a schlicht disc $\ID_{\sigma_1}$ with $$ \sigma_1=F_1\left(\rho\right), \quad F_1(z)=\Lambda_2^2 z-\Lambda_1|z|^2+\left(\Lambda_2^3-\Lambda_2\right) \ln \left(1-\frac{z}{\Lambda_2}\right). $$
  This result is sharp, with an extremal function given by $F_1(z)$.
\end{Thm}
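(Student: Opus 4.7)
The plan is to separate the argument into three pieces: a pointwise Schwarz--Pick bound on $H'$, a univalence estimate on $\ID_{\rho_1}$, and a radial lower bound on $|F|$, all organised so that the candidate extremal $F_1$ saturates each inequality simultaneously.

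First, since $H'/\Lambda_2$ maps $\ID$ into $\ID$ with value $1/\Lambda_2$ at the origin, the invariant form of the Schwarz--Pick lemma forces $H'(z)$ into the Euclidean disc centred at $c(|z|)=\Lambda_2^2(1-|z|^2)/(\Lambda_2^2-|z|^2)$ (which is real positive) with radius $\varrho(|z|)=\Lambda_2|z|(\Lambda_2^2-1)/(\Lambda_2^2-|z|^2)$. From this I would extract the two key estimates
\[
|H'(z)-1|\le\frac{|z|(\Lambda_2^2-1)}{\Lambda_2-|z|},\qquad |H'(z)|\ge c(|z|)-\varrho(|z|)=\frac{\Lambda_2(1-\Lambda_2|z|)}{\Lambda_2-|z|},
\]
both saturated by $H_1'(z)=\Lambda_2(1-\Lambda_2 z)/(\Lambda_2-z)$, the analytic part of the extremal in the statement.

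For univalence on $\ID_\rho$ with $\rho<\rho_1$, I take distinct $z_1,z_2\in\ID_\rho$ and decompose
\[
F(z_1)-F(z_2)=\bigl(H(z_1)-H(z_2)\bigr)+\bar z_1\bigl(G(z_1)-G(z_2)\bigr)+(\bar z_1-\bar z_2)G(z_2).
\]
Integrating $H'$ along the segment $[z_2,z_1]$, factoring out $z_1-z_2$, and applying the first bound gives $|H(z_1)-H(z_2)|\ge|z_1-z_2|\,\Lambda_2(1-\Lambda_2\rho)/(\Lambda_2-\rho)$. Combined with $|G(z_1)-G(z_2)|\le\Lambda_1|z_1-z_2|$ and $|G(z_2)|\le\Lambda_1\rho$ (from $G(0)=0$), one obtains
\[
|F(z_1)-F(z_2)|\ge\frac{|z_1-z_2|}{\Lambda_2-\rho}\bigl[2\Lambda_1\rho^2-\Lambda_2(\Lambda_2+2\Lambda_1)\rho+\Lambda_2\bigr],
\]
and rationalising the smaller positive root of the quadratic in brackets yields precisely the stated $\rho_1$.

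For the schlicht disc it suffices to show $\min_{|z|=\rho_1}|F(z)|\ge\sigma_1$. Writing $H(re^{i\theta})=e^{i\theta}\int_0^r H'(te^{i\theta})\,dt$ and splitting $H'(te^{i\theta})=c(t)+(H'(te^{i\theta})-c(t))$ under the integral, the fact that the Schwarz--Pick disc for $H'$ is centred on the positive real axis gives
\[
|H(z)|\ge\int_0^{|z|}\bigl[c(t)-\varrho(t)\bigr]\,dt=\Lambda_2^2|z|+(\Lambda_2^3-\Lambda_2)\ln\!\bigl(1-|z|/\Lambda_2\bigr).
\]
Together with $|\bar z G(z)|\le\Lambda_1|z|^2$ this gives $|F(z)|\ge F_1(|z|)$; a short calculation shows that $F_1'(r)$ has exactly the same quadratic numerator as above, so $F_1$ is strictly increasing on $[0,\rho_1]$ and $\sigma_1=F_1(\rho_1)$. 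Sharpness is then immediate, since for $F_1$ itself $G_1(z)=-\Lambda_1 z$ and $H_1'$ is a scaled Blaschke factor, so $F_1$ lies in the class and $F_1'(\rho_1)=0$ along the positive real axis forces univalence to fail just beyond $\rho_1$ while $|F_1(\rho_1)|=\sigma_1$. I expect the main technical hurdle to be the lower bound $|H(z)|\ge H_1(|z|)$: the non-obvious point is that the Schwarz--Pick disc for $H'$ is \emph{centred on the positive real axis}, and this is exactly what allows the radial integration to carry the sharp constant $c(t)-\varrho(t)$ from $|H'|$ over to $|H|$ without any loss.
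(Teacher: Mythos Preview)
Your argument is correct and follows the same skeleton as the paper's proof of Theorem~3.3 --- the poly-analytic generalisation whose $m=2$ case is precisely Theorem~D (the paper does not prove Theorem~D itself, quoting it from Liu--Ponnusamy): Schwarz--Pick control on $H'$, a segment estimate for univalence, a radial lower bound for the covering disc, and sharpness via the explicit extremal. The only difference is that you derive the two key inequalities $|H'(z)-1|\le|z|(\Lambda_2^2-1)/(\Lambda_2-|z|)$ and $|H(z)|\ge\Lambda_2^2|z|+(\Lambda_2^3-\Lambda_2)\ln(1-|z|/\Lambda_2)$ directly from the centre--radius description of the Schwarz--Pick value disc for $H'$, whereas the paper imports them as the black-box Lemma~2.1 from the cited reference.
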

\begin{Thm}\cite{Liu-Ponnusamy-2024}\label{Thm-E}
Suppose that $\Lambda \geq 0$. Let $F(z)=\bar{z} G(z)+$ $H(z)$ be a bi-analytic function, with $G(0)=H(0)=H'(0)-1=0,|G'(z)| \leq \Lambda$  and $|H(z)|<1$ or $|H'(z)| \leq 1$ for all $z \in \mathbb{D}$.Then $F$ is univalent on a disc $\ID_{\rho_2}$ with

$$
\rho_2=\left\{\begin{array}{cl}
1, & \text { when } 0 \leq \Lambda \leq \frac{1}{2}, \vspace{2mm}\\ 
\frac{1}{2 \Lambda}, & \text { when } \Lambda>\frac{1}{2},
\end{array}\right.
$$
and $F(\ID_{\rho_1})$ contains a schlicht disc $\ID_{\sigma_1}$ with $\sigma_2=\rho_2-\Lambda \rho_2^2$. This result is sharp.
\end{Thm}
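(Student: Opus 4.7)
The plan is to first simplify the two-pronged hypothesis on $H$, reducing $F$ to the clean form $F(z)=z+\bar z\,G(z)$, and then run standard line-integral and boundary modulus arguments to pin down the univalence radius and the Koebe constant.

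First I would observe that both alternative conditions on $H$ collapse to $H(z)\equiv z$. If $|H(z)|<1$ on $\mathbb{D}$ with $H(0)=0$, Schwarz's lemma gives $|H'(0)|\le 1$ with equality only for a rotation; together with $H'(0)=1$ this forces $H(z)=z$. If instead $|H'(z)|\le 1$ with $H'(0)=1$, the maximum modulus principle applied to the holomorphic function $H'$ makes $H'\equiv 1$, and $H(0)=0$ again gives $H(z)=z$. Meanwhile, integrating $G'$ along a radius from $0$, together with $G(0)=0$ and $|G'|\le\Lambda$, yields $|G(z)|\le\Lambda|z|$. Thus $F(z)=z+\bar z\,G(z)$, with $F_z=1+\bar z\,G'(z)$ and $F_{\bar z}=G(z)$.

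For univalence on $\mathbb{D}_{\rho_2}$ I would use the standard line-integral trick. For distinct $z_1,z_2\in\mathbb{D}_\rho$ with $\rho\le\rho_2$, parametrize the segment $\gamma(t)=z_2+t(z_1-z_2)$ (which lies in $\mathbb{D}_\rho$ by convexity) and expand
$$
F(z_1)-F(z_2)=(z_1-z_2)\int_0^1 F_z(\gamma(t))\,dt+(\bar z_1-\bar z_2)\int_0^1 F_{\bar z}(\gamma(t))\,dt.
$$
Since $|\overline{\gamma(t)}\,G'(\gamma(t))|\le\Lambda\rho$, the first integral has modulus at least $1-\Lambda\rho$, while $|G(\gamma(t))|\le\Lambda\rho$ bounds the second in modulus by $\Lambda\rho$. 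Hence $|F(z_1)-F(z_2)|\ge|z_1-z_2|(1-2\Lambda\rho)>0$ whenever $\rho<1/(2\Lambda)$, which matches $\rho_2=1$ when $\Lambda\le 1/2$ and $\rho_2=1/(2\Lambda)$ otherwise, securing injectivity of $F$ on $\mathbb{D}_{\rho_2}$.

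For the covering part, the triangle inequality gives $|F(z)|\ge|z|-\Lambda|z|^2=|z|(1-\Lambda|z|)$. For any $\rho<\rho_2$, the map $F$ is univalent on $\mathbb{D}_\rho$ with $F(0)=0$ and $|F|\ge\rho(1-\Lambda\rho)$ on $\partial\mathbb{D}_\rho$; the standard Landau argument (the boundary of the open set $F(\mathbb{D}_\rho)$ is contained in $F(\partial\mathbb{D}_\rho)$, so any $w$ with $|w|<\rho(1-\Lambda\rho)$ can be reached by lifting a straight segment from $0$ that cannot escape $\mathbb{D}_\rho$) yields $F(\mathbb{D}_\rho)\supseteq\mathbb{D}_{\rho(1-\Lambda\rho)}$; letting $\rho\uparrow\rho_2$ gives the schlicht disc $\mathbb{D}_{\sigma_2}$ with $\sigma_2=\rho_2-\Lambda\rho_2^2$. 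For sharpness, the candidate extremal is $F_*(z)=z-\Lambda|z|^2$, corresponding to $G(z)=-\Lambda z$ and $H(z)=z$: on the real axis its derivative $1-2\Lambda x$ vanishes at $x=1/(2\Lambda)$, destroying univalence beyond $\rho_2$ when $\Lambda>1/2$, while $\sup_{\mathbb{D}_{\rho_2}}\real F_*=\sigma_2$ shows $\sigma_2$ cannot be enlarged. The main delicate point is the boundary case $\rho=\rho_2$, where the univalence estimate degenerates (especially in the regime $\Lambda\to(1/2)^-$, $\rho_2=1$); passing to slightly smaller discs and taking limits is the cleanest way around it.
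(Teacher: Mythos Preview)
Your argument is correct, but there is nothing in the present paper to compare it against: Theorem~E is quoted from Liu--Ponnusamy \cite{Liu-Ponnusamy-2024} as background and is not re-proved here. The paper's own contribution in this direction is Theorem~\ref{Rohi-Vasu-P4-Theorem-002}, which generalizes the $|H(z)|<M$ half of Theorem~E to poly-analytic order $m$, and its proof proceeds differently from yours.

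The contrast is worth noting. You exploit the rigidity hidden in the hypotheses: either $|H|<1$ with $H(0)=0$, $H'(0)=1$ (Schwarz equality case) or $|H'|\le 1$ with $H'(0)=1$ (maximum modulus for $H'$) forces $H(z)\equiv z$, after which $F(z)=z+\bar z\,G(z)$ and the estimates $|F_z-1|\le\Lambda|z|$, $|F_{\bar z}|\le\Lambda|z|$ yield $|F(z_1)-F(z_2)|\ge(1-2\Lambda\rho)|z_1-z_2|$ directly. The paper's Theorem~\ref{Rohi-Vasu-P4-Theorem-002}, by contrast, keeps a general bound $|f_0|<M$ and uses the coefficient inequality $|a_n|\le M-1/M$ of Lemma~\ref{Rohi-Vasu-P4-Lemma-002} to control $|f_0'(z)-1|$; when $M=1$ that coefficient bound degenerates to $a_n=0$ for $n\ge2$, which is exactly your observation $H(z)=z$ in disguise, and the paper's equation for $r_2$ collapses to $1-2\Lambda r=0$. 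Your route is shorter and makes the sharpness via $F_*(z)=z-\Lambda|z|^2$ transparent (the real-axis analysis and the bound $\sup_{\mathbb{D}_{\rho_2}}\operatorname{Re}F_*=\sigma_2$ are clean), whereas the paper's method is built to survive the non-rigid case $M>1$ and higher $m$. One small wording fix: in your univalence step you want $\rho<\rho_2$ (not $\rho\le\rho_2$) to keep $1-2\Lambda\rho$ strictly positive when $\Lambda>1/2$; you already handle this correctly by taking limits at the end.
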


\section{Key Results}
\begin{lem}\cite{Liu-Ponnusamy-2024}\label{Rohi-Vasu-P4-Lemma-001} 
Let $H$ be a holomorphic mapping of the unit disc $\ID$ with $H'(0)=1$ and $\left|H^{\prime}(z)\right|<\Lambda$ for all $z \in \mathbb{D}$ and for some $\Lambda>1$.
\begin{enumerate}
\item  For all $z_1, z_2 \in \mathbb{D}_r\left(0<r<1, z_1 \neq z_2\right)$, we have
$$
\left|H\left(z_1\right)-H\left(z_2\right)\right|=\left|\int_\gamma H^{\prime}(z) d z\right| \geq \Lambda \frac{1-\Lambda r}{\Lambda-r}\left|z_1-z_2\right|,
$$
where $\gamma=\left[z_1, z_2\right]$ denotes the closed line segment joining $z_1$ and $z_2$.
\item  For $z^{\prime} \in \partial \mathbb{D}_r(0<r<1)$ with $w^{\prime}=H\left(z^{\prime}\right) \in H\left(\partial \mathbb{D}_r\right)$ and $\left|w^{\prime}\right|=\min \{|w|: w \in H$ $\left.\left(\partial \mathbb{D}_r\right)\right\}$, set $\gamma_0=H^{-1}\left(\Gamma_0\right)$ and $\Gamma_0=\left[0, w^{\prime}\right]$ denotes the closed line segment joining the origin and $w^{\prime}$. Then we have
$$
\left|H\left(z^{\prime}\right)\right| \geq \Lambda \int_0^r \frac{\frac{1}{\Lambda}-t}{1-\frac{t}{\Lambda}} d t=\Lambda^2 r+\left(\Lambda^3-\Lambda\right) \ln \left(1-\frac{r}{\Lambda}\right) .
$$
\end{enumerate}
\end{lem}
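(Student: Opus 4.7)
The plan is to extract both statements from a single sharp estimate on $H'$ furnished by the Schwarz lemma. Because $H'$ is holomorphic with $H'(0)=1$ and $H'(\mathbb{D})\subset\{|w|<\Lambda\}$, the auxiliary function $H'/\Lambda$ maps $\mathbb{D}$ into $\overline{\mathbb{D}}$ and takes the value $1/\Lambda$ at the origin. Post-composing with the disc automorphism that sends $1/\Lambda$ to $0$ and applying the Schwarz lemma yields
\[
\left|\frac{\Lambda\bigl(H'(z)-1\bigr)}{\Lambda^{2}-H'(z)}\right|\le|z|.
\]
Inverting this Möbius relation, $H'(\overline{\mathbb{D}_{r}})$ is contained in a closed disc whose real-axis endpoints are $\Lambda(1+\Lambda r)/(\Lambda+r)$ and $K(r):=\Lambda(1-\Lambda r)/(\Lambda-r)$. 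A short optimisation on the boundary circle of that image disc shows that $K(r)$ is simultaneously the minimum of $|H'(z)|$ and of $\operatorname{Re}H'(z)$ on $\overline{\mathbb{D}_{r}}$, and is positive precisely when $r<1/\Lambda$.

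For part (1) I would parameterise the chord $\gamma=[z_{2},z_{1}]$ by $z(s)=z_{2}+se^{i\alpha}$ with $\alpha=\arg(z_{1}-z_{2})$, so that
\[
e^{-i\alpha}\bigl(H(z_{1})-H(z_{2})\bigr)=\int_{0}^{|z_{1}-z_{2}|}H'(z(s))\,ds.
\]
Since $|w|\ge\operatorname{Re}w$ for any complex $w$, and since the chord lies in the convex disc $\mathbb{D}_{r}$, the real-part bound from the previous paragraph gives
\[
|H(z_{1})-H(z_{2})|\ge\int_{0}^{|z_{1}-z_{2}|}\operatorname{Re}H'(z(s))\,ds\ge K(r)\,|z_{1}-z_{2}|,
\]
which is exactly the stated inequality.

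For part (2) I would first invoke (1) to conclude that $H$ is injective on $\overline{\mathbb{D}_{r}}$ whenever $r<1/\Lambda$, so that $\gamma_{0}=H^{-1}(\Gamma_{0})$ is a well-defined rectifiable arc in $\overline{\mathbb{D}_{r}}$; minimality of $|w'|$ on $H(\partial\mathbb{D}_{r})$ together with the Landau normalisation $H(0)=0$ ensures that $\Gamma_{0}\subset\overline{H(\mathbb{D}_{r})}$ and that $\gamma_{0}$ connects $0$ to $z'$. Parameterising $\gamma_{0}:[0,L]\to\overline{\mathbb{D}_{r}}$ by arc length and using the pointwise lower bound,
\[
|H(z')|=|w'|=\int_{0}^{L}|H'(\gamma_{0}(t))|\,dt\ge\int_{0}^{L}K(|\gamma_{0}(t)|)\,dt.
\]
Because $K$ is decreasing and $\tfrac{d}{dt}|\gamma_{0}(t)|\le|\gamma_{0}'(t)|=1$, the integrand dominates $K(|\gamma_{0}(t)|)\tfrac{d}{dt}|\gamma_{0}(t)|$, whose integral, by the fundamental theorem of calculus, collapses to $\int_{0}^{r}K(s)\,ds$ regardless of whether $|\gamma_{0}(t)|$ is monotone. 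The partial-fraction identity $K(s)=\Lambda^{2}-\Lambda(\Lambda^{2}-1)/(\Lambda-s)$ then integrates explicitly to $\Lambda^{2}r+(\Lambda^{3}-\Lambda)\ln(1-r/\Lambda)$, as claimed.

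The main technical point is verifying that the leftmost point of the Schwarz-Pick image disc coincides with its closest-to-origin point, so that the same sharp constant $K(r)$ bounds both $|H'|$ and $\operatorname{Re}H'$ from below; this is a short but not automatic optimisation along the boundary circle of the image disc. The other delicate step is accommodating a non-radial preimage curve $\gamma_{0}$ in part (2), which is handled cleanly by the inequality $\tfrac{d}{dt}|\gamma_{0}(t)|\le|\gamma_{0}'(t)|$ before invoking the chain rule.
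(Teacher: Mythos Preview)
The paper does not prove this lemma; it is quoted from Liu--Ponnusamy and invoked as a black box, so there is no in-paper argument to compare your proposal against.

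On its own merits your Schwarz--Pick route is the standard one and is essentially correct. Two small caveats are worth flagging. First, you assume $H(0)=0$ (``the Landau normalisation''), which is not part of the stated hypotheses but is tacitly needed for $\gamma_0=H^{-1}([0,w'])$ to be a curve from $0$ to $z'$; in every application in the paper the role of $H$ is played by $f_0$ with $f_0(0)=0$, so this is harmless. Second, your argument for part~(2) is carried out under the restriction $r<1/\Lambda$, which you need both for the univalence of $H$ on $\overline{\ID_r}$ and for the pointwise step $K(u)\ge K(u)\,u'$ (this uses $K(u)\ge 0$; when $K(u)<0$ and $u'<0$ the inequality reverses). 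For $r\ge 1/\Lambda$ the conclusion of part~(1) is vacuous since its right-hand side is nonpositive, while part~(2) as literally stated would require a separate argument in the narrow range where the integral bound remains positive. However, the paper only ever applies the lemma with $r=r_1$ or $r=r_3$, and those radii satisfy $K(r_j)>0$ (this term appears in their defining equations), hence $r_j<1/\Lambda$; your treatment therefore covers every use made of the lemma.
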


\begin{lem}\cite{Liu-Ponnusamy-2024}\label{Rohi-Vasu-P4-Lemma-002} 
Let $f=\sum_{n=1}^{\infty}a_nz^n$ be a holomorphic mapping of the unit disc with $|a_1|=1$ and $|f(z)|\leq M$ for each $z\in \ID$. Then $M\geq 1$ and 
$$
\left|a_n\right| \leq M-\frac{1}{M} \text { for } n=2,3, \ldots.
$$
These inequalities are sharp, with the extremal functions $f_n(z)$, where
$$
f_1(z)=z, \quad f_n(z)=M z \frac{1-M z^{n-1}}{M-z^{n-1}}=z-\left(M-\frac{1}{M}\right) z^n-\sum_{k=3}^{\infty} \frac{M^2-1}{M^{k-1}} z^{(n-1)(k-1)+1}
$$
for $n=2,3, \ldots$.

\end{lem}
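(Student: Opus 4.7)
My plan is to establish $M\geq 1$ directly from Schwarz's lemma and then obtain $|a_n|\leq M-1/M$ through a rotational averaging that isolates the coefficient $a_n$, followed by a single application of the derivative form of the Schwarz--Pick lemma. Since $f/M$ is a holomorphic self-map of $\ID$ fixing the origin, Schwarz gives $|a_1|/M\leq 1$; together with $|a_1|=1$ this forces $M\geq 1$, and after the rotation $f\mapsto\overline{a_1}f$ we may assume $a_1=1$ in what follows.

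The crucial construction is, for each $n\geq 2$, the $(n-1)$-fold average
\[
f^*(z) = \frac{1}{n-1}\sum_{k=0}^{n-2}\zeta^{-k}f(\zeta^k z), \qquad \zeta = e^{2\pi i/(n-1)}.
\]
Expanding term by term, the inner character sum $\frac{1}{n-1}\sum_{k=0}^{n-2}\zeta^{(m-1)k}$ equals $1$ when $m\equiv 1\pmod{n-1}$ and vanishes otherwise, so $f^*(z)=z+a_n z^n + a_{2n-1}z^{2n-1}+\cdots$, while the triangle inequality preserves $|f^*|\leq M$. Applying Schwarz's lemma to $f^*/M$ gives $|f^*(z)|\leq M|z|$, hence $|f^*(z)/z|\leq M$ on $\ID$. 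Because $f^*(z)/z$ is a power series in $z^{n-1}$ alone, one may write $f^*(z)/z = g(z^{n-1})$ with $g$ holomorphic on $\ID$, satisfying $g(0)=1$, $g'(0)=a_n$, and $|g|\leq M$.

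Now the derivative form of the Schwarz--Pick lemma applied to $g/M$ at the origin gives
\[
\frac{|g'(0)|}{M} \leq 1 - \left|\frac{g(0)}{M}\right|^2 = 1 - \frac{1}{M^2},
\]
so $|a_n|=|g'(0)|\leq M-1/M$. Sharpness is confirmed by expanding the Blaschke-type extremal $f_n(z)=Mz(1-Mz^{n-1})/(M-z^{n-1})$ as $z$ times the geometric series expansion of $(1-Mz^{n-1})/(1-z^{n-1}/M)$ in powers of $z^{n-1}/M$, which reproduces the stated Taylor series and realizes equality $|a_n|=M-1/M$. I expect the only non-obvious step to be the discovery of the averaging $f^*$; once that symmetrization is in hand, the reduction to a single Schwarz--Pick estimate on a function of the variable $w=z^{n-1}$ is routine, and the lower bound $M\geq 1$ as well as the sharpness calculation are immediate.
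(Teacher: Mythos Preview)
Your proof is correct. The rotational averaging to isolate coefficients with index $\equiv 1\pmod{n-1}$, the substitution $w=z^{n-1}$ to reduce to a bounded function $g$ with $g(0)=1$ and $g'(0)=a_n$, and the application of the Schwarz--Pick derivative estimate at the origin are all valid and cleanly executed; the sharpness verification via the Blaschke-type function $f_n$ is also fine.

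As for comparison: the paper does not prove this lemma at all. It is quoted from Liu and Ponnusamy \cite{Liu-Ponnusamy-2024} and used as a black box in the proof of Theorem~\ref{Rohi-Vasu-P4-Theorem-002}. So there is no in-paper argument to set your proof against; you have supplied a self-contained proof where the authors simply cite the literature.
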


\begin{Thm}\cite[Theorem 2]{Colonna-1989} \label{Thm-F}
Let $f:\ID \to \ID$ be a holomorphic function. Then 
$$\sup_{z\in \ID} \left(1-|z|^2\right)|f'(z)|\leq 1.$$  
\end{Thm}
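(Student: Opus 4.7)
The plan is to deduce Theorem F directly from the Schwarz--Pick lemma, which itself is a routine consequence of the classical Schwarz lemma via a Möbius reduction. The key observation is that the quantity $(1-|z|^2)|f'(z)|$ is a hyperbolic differential and is controlled by the hyperbolic metric of the target disc, so boundedness of $f$ by $1$ gives the claimed bound.

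Concretely, fix an arbitrary point $z_0 \in \ID$ and consider the disc automorphisms
\[
\varphi_{z_0}(z) = \frac{z+z_0}{1+\overline{z_0}\,z}, \qquad \psi_{f(z_0)}(w) = \frac{w - f(z_0)}{1 - \overline{f(z_0)}\,w}.
\]
Both map $\ID$ onto $\ID$, with $\varphi_{z_0}(0)=z_0$ and $\psi_{f(z_0)}(f(z_0)) = 0$. The composition $g := \psi_{f(z_0)} \circ f \circ \varphi_{z_0}$ is then a holomorphic self-map of $\ID$ with $g(0)=0$, so the classical Schwarz lemma yields $|g'(0)| \leq 1$.

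The next step is to evaluate $g'(0)$ by the chain rule. A short computation using $\varphi_{z_0}'(0) = 1 - |z_0|^2$ and $\psi_{f(z_0)}'(f(z_0)) = 1/(1 - |f(z_0)|^2)$ gives
\[
g'(0) = \frac{(1-|z_0|^2)\, f'(z_0)}{1 - |f(z_0)|^2},
\]
so $|g'(0)| \leq 1$ is exactly the Schwarz--Pick inequality
\[
(1-|z_0|^2)|f'(z_0)| \leq 1 - |f(z_0)|^2.
\]

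To finish, I would simply observe that $|f(z_0)| < 1$ forces $1-|f(z_0)|^2 \leq 1$, so $(1-|z_0|^2)|f'(z_0)| \leq 1$; taking the supremum over $z_0 \in \ID$ gives the statement. There is no real obstacle here: the only subtlety is the chain-rule derivative computation, which must keep track of the conjugate-linear nature of $\overline{f(z_0)}$ correctly, but since we only need $|g'(0)|$, the modulus calculation is straightforward.
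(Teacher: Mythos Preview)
Your argument is correct: this is precisely the Schwarz--Pick lemma, obtained by conjugating with disc automorphisms to reduce to the classical Schwarz lemma, and then dropping the factor $1-|f(z_0)|^2\le 1$ on the right-hand side. There is no gap.

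As for comparison with the paper: the paper does not prove Theorem~F at all. It is quoted as a known result from \cite{Colonna-1989} and invoked as a black box (specifically, to bound $|f_k'(z)|\le M_k/(1-|z|^2)$ in the proof of Theorem~\ref{Rohi-Vasu-P4-Theorem-001} and to bound $|f_0'(z)|\le M/(1-|z|^2)$ in Theorem~\ref{Rohi-Vasu-P4-Theorem-005}). So your Schwarz--Pick derivation supplies a self-contained proof where the paper simply cites one; in that sense your write-up is strictly more informative than the paper on this particular statement.
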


Now, we prove the following lemma, which is vital to the proof of main results.
\begin{lem}\label{Rohi-Vasu-P4-Lemma-003} 
For $r\in [0,1)$, let
$$
\phi(r)=\frac{\Lambda(1-\Lambda r)}{\Lambda-r}-\sum_{k=1}^{m-1}r^k\left(\frac{M_k}{1-r^2}+kM_k\right)
$$
where $\Lambda> 1$ and $M_k\geq 0$ are real constants. Then $\phi$ is strictly decreasing and there is unique $r_0 \in (0,1)$ such that $\phi(r_0)=0$. Furthermore, we have
$$\Lambda^2 r_0+(\Lambda^3-\Lambda)\ln \left(1- \frac{r_0}{\Lambda}\right)-\sum_{k=1}^{m-1}r_0^{k+1}M_k>0.$$
\end{lem}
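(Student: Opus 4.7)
The plan is to address the two claims separately: first establish strict monotonicity and unique solvability of $\phi(r_0)=0$, then prove the positivity at $r_0$ by comparing $\phi$ with the derivative of a carefully chosen auxiliary function. For monotonicity I would differentiate $\phi$ termwise. The leading term has derivative $\Lambda(1-\Lambda^2)/(\Lambda-r)^2$, which is strictly negative since $\Lambda>1$. Each subtracted summand $r^k\bigl(M_k/(1-r^2)+kM_k\bigr)$ is non-decreasing on $[0,1)$: the factorization
\[
\frac{d}{dr}\frac{r^k}{1-r^2}=\frac{r^{k-1}\bigl[k+(2-k)r^2\bigr]}{(1-r^2)^2}
\]
has bracket $\ge 2$ for $k\ge 1$ and $r^2<1$, so the derivative is nonnegative. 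Subtracting nonnegative quantities from a strictly negative quantity gives $\phi'<0$, hence $\phi$ is strictly decreasing. Since $\phi(0)=1>0$, while $\phi(r)\to-\infty$ as $r\to 1^-$ whenever some $M_k>0$ (because of the $M_k/(1-r^2)$ factors), and $\phi(1/\Lambda)=0$ in the degenerate case $M_1=\cdots=M_{m-1}=0$, the intermediate value theorem combined with strict monotonicity yields a unique $r_0\in(0,1)$ with $\phi(r_0)=0$.

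For the final inequality, introduce the auxiliary function
\[
F_1(r)=\Lambda^2 r+(\Lambda^3-\Lambda)\ln\!\left(1-\tfrac{r}{\Lambda}\right)-\sum_{k=1}^{m-1}r^{k+1}M_k,
\]
so that $F_1(0)=0$ and $F_1(r_0)$ is exactly the quantity to be shown positive. A short calculation using the identity $\Lambda^2-(\Lambda^2-1)/(1-r/\Lambda)=\Lambda(1-\Lambda r)/(\Lambda-r)$ gives
\[
F_1'(r)=\frac{\Lambda(1-\Lambda r)}{\Lambda-r}-\sum_{k=1}^{m-1}(k+1)r^k M_k.
\]
Subtracting $\phi(r)$, the coefficient of $r^k M_k$ collapses from $-(k+1)+k+1/(1-r^2)$ to $r^2/(1-r^2)$, producing the clean identity
\[
F_1'(r)-\phi(r)=\sum_{k=1}^{m-1}\frac{r^{k+2}M_k}{1-r^2}\ge 0
\]
valid on $[0,1)$. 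By strict monotonicity and $\phi(r_0)=0$, we have $\phi(t)>0$ for every $t\in[0,r_0)$, and hence $F_1'(t)\ge\phi(t)>0$ on $[0,r_0)$. Integrating from $0$ to $r_0$ then yields $F_1(r_0)>F_1(0)=0$, which is the desired strict inequality.

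The main obstacle is isolating the auxiliary function $F_1$ and spotting the telescoping identity $F_1'-\phi=\sum r^{k+2}M_k/(1-r^2)$; without this bridge between the two expressions appearing in the lemma, one would have to analyze $F_1(r_0)$ directly via the transcendental equation satisfied by $r_0$, which seems much harder. Once the identity is in hand, the rest is routine calculus and an integration from $0$ to $r_0$.
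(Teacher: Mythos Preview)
Your proof is correct and follows essentially the same route as the paper: strict monotonicity via termwise differentiation, existence and uniqueness of $r_0$ via the intermediate value theorem, and the final inequality by observing that $F_1'(r)\ge\phi(r)>0$ on $[0,r_0)$ and integrating (the paper obtains the same inequality $F_1'(r)>0$ from $\phi(r)>0$ simply by using $1/(1-r^2)\ge 1$, which is equivalent to your identity $F_1'-\phi=\sum r^{k+2}M_k/(1-r^2)\ge 0$). One minor slip: for $k=1$ the bracket $k+(2-k)r^2=1+r^2$ lies in $[1,2)$, not $\ge 2$, but it is still positive, which is all your argument needs.
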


\begin{proof}
The function $\phi$ is strictly decreasing follows from the fact that
\begin{align*}
 \phi'(r)& =\frac{\Lambda-\Lambda^3}{(\Lambda-r)^2}-\sum_{k=1}^{m-1} \frac{k r^{k-1}\left(1+r^2\right)}{(1-r^2)^2}M_k+ k^2 r^{k-1} M_k \\
& =\frac{\Lambda-\Lambda^3}{(\Lambda-r)^2}-\sum_{k=1}^{m-1} k r^{k-1}\left(\frac{1+r^2}{(1-r^2)^2}M_k+k M_k\right) \\
& <0 \quad \text { for } r \in(0,1) \text {. }
\end{align*}
Furthermore, we observe that
$$
\phi(0)=1 \text { and } \lim _{r \to 1^{-}} \phi(r)=-\infty \text {. }
$$
Therefore,  by intermediate value theorem and the fact that $\phi$ is strictly decreasing, there exists unique real $r_0\in (0,1)$ such that $\phi(r_0)=0$.

\vspace{2mm}
Since $\phi$ is strictly decreasing and $\phi(0)=1$, we see that
$$
\frac{\Lambda(1-\Lambda r)}{\Lambda-r}-\sum_{k=1}^{m-1}r^k\left(\frac{M_k}{1-r^2}+kM_k\right)>0 \quad \text{for}\ r\in (0,r_0).
$$
This implies that
\begin{equation}\label{Rohi-Vasu-P4-equation-001}
\frac{\Lambda(1-\Lambda r)}{\Lambda-r}-\sum_{k=1}^{m-1}r^k(k+1)M_k >0 \quad \text{for}\ r\in (0,r_0).
\end{equation}
Integrating \eqref{Rohi-Vasu-P4-equation-001} from $0$ to $r_0$, we obtain
$$
\Lambda^2 r_0+(\Lambda^3-\Lambda)\ln \left(1- \frac{r_0}{\Lambda}\right)-\sum_{k=1}^{m-1}r_0^{k+1}M_k  >0 .
$$
This completes the proof.
\end{proof}
\section{\textbf{Landau-Bloch type Theorem for Poly-analytic functions}}
In this section, we derive three Landau-type theorems for certain subclasses of poly-analytic functions. We first establish the Landau-type theorem for subclass $\mathcal{F}_1$ of poly-analytic functions.
\begin{thm}\label{Rohi-Vasu-P4-Theorem-001}
Suppose that $m$ is a positive integer and $M_1,M_2,..,M_{k-1}\geq 0$.
Let $F(z)=\sum_{k=0}^{m-1}\bar{z}^kf_k(z)$ be a poly-analytic function of order $m$ on the unit disc $\ID$, where all the $f_k$ are holomorphic on $\ID$, satisfying $f_k(0)=F_z(0)-1=0$ for $k\in \{0,1,\cdots,m-1\}.$ If $|f_0'(z)|< \Lambda$ and $|f_k(z)|\leq M_k$, $k\in \{1,2,\cdots,m-1\}$ for all $z\in \ID$, then
$\Lambda>1$, $F(z)$ is univalent in $\ID_{r_1}$ and $F(\ID_{r_1})$ contains a schlicht disc $\ID_{R_1}$, where $r_1$ is the unique root in $(0,1)$  of the equation 
$$
\frac{\Lambda(1-\Lambda r)}{\Lambda-r}-\sum_{k=1}^{m-1}r^k\left(\frac{M_k}{1-r^2}+kM_k\right)=0
$$
and 
$$
R_1=\Lambda^2 r_1+(\Lambda^3-\Lambda)\ln \left(1- \frac{r_1}{\Lambda}\right)-\sum_{k=1}^{m-1}r_1^{k+1}M_k.
$$
Morever, when $M_k=0, k\in \{1,2,\cdots,m-1\}$, the result is sharp.
\end{thm}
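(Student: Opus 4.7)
The overall plan is to bound $|F(z_1)-F(z_2)|$ from below on $\ID_{r_1}$ (yielding univalence) and to bound $|F(z)|$ from below on $\partial \ID_{r_1}$ (yielding the covering disc), in both cases by isolating the holomorphic part $f_0$ and treating the non-holomorphic terms $\bar z^k f_k(z)$ as perturbations. Before starting, observe that $F_z(0)=f_0'(0)=1$ together with $|f_0'(z)|<\Lambda$ on $\ID$ forces $\Lambda>1$ upon evaluating at $z=0$; this is precisely the hypothesis required to invoke Lemma 2.1 on $f_0$.

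For the univalence step, I would fix $z_1\neq z_2$ in $\ID_{r_1}$, set $r=\max(|z_1|,|z_2|)$, and decompose
\[
F(z_1)-F(z_2) = \bigl(f_0(z_1)-f_0(z_2)\bigr) + \sum_{k=1}^{m-1}\bigl(\bar z_1^k f_k(z_1) - \bar z_2^k f_k(z_2)\bigr).
\]
Lemma 2.1(1) applied to $f_0$ supplies $|f_0(z_1)-f_0(z_2)|\ge \Lambda(1-\Lambda r)/(\Lambda-r)\,|z_1-z_2|$. For each remaining term, I would use the algebraic identity
\[
\bar z_1^k f_k(z_1) - \bar z_2^k f_k(z_2) = \bar z_1^k\bigl(f_k(z_1)-f_k(z_2)\bigr) + f_k(z_2)\bigl(\bar z_1^k-\bar z_2^k\bigr),
\]
and estimate the two pieces separately. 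Applying Theorem F to $f_k/M_k:\ID\to\ID$ yields $|f_k'(z)|\le M_k/(1-|z|^2)$, so integration along $[z_2,z_1]$ gives $|f_k(z_1)-f_k(z_2)|\le M_k(1-r^2)^{-1}|z_1-z_2|$; meanwhile the Schwarz lemma for $f_k$ provides the sharper bound $|f_k(z_2)|\le M_k r$, which combined with $|\bar z_1^k-\bar z_2^k|\le k r^{k-1}|z_1-z_2|$ contributes $k M_k r^k|z_1-z_2|$. Assembling these will yield exactly $|F(z_1)-F(z_2)|\ge \phi(r)\,|z_1-z_2|$, and Lemma 2.3 then guarantees positivity for $r<r_1$, proving univalence on $\ID_{r_1}$.

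For the covering radius, I would fix $z\in\partial\ID_{r_1}$ and estimate
\[
|F(z)| \ge |f_0(z)| - \sum_{k=1}^{m-1} r_1^k|f_k(z)|.
\]
Lemma 2.1(2) lower-bounds $\min_{|z|=r_1}|f_0(z)|$ by $\Lambda^2 r_1+(\Lambda^3-\Lambda)\ln(1-r_1/\Lambda)$, while the Schwarz lemma gives $|f_k(z)|\le M_k r_1$; subtraction produces $|F(z)|\ge R_1$. The second conclusion of Lemma 2.3 is precisely the statement that $R_1>0$, after which a standard argument (the image of $\partial\ID_{r_1}$ winds once around any $w\in\ID_{R_1}$) shows $F(\ID_{r_1})\supset\ID_{R_1}$.

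For sharpness in the case $M_1=\cdots=M_{m-1}=0$, the perturbation terms vanish and the extremal is $f_0(z)=\Lambda^2 z+(\Lambda^3-\Lambda)\ln(1-z/\Lambda)$, which attains equality throughout Lemma 2.1; one checks that $r_1=1/\Lambda$ and $R_1=f_0(r_1)$. The main obstacle will be the bookkeeping for the mixed term $\bar z_1^k f_k(z_1)-\bar z_2^k f_k(z_2)$: using the sharper Schwarz estimate $|f_k(z_2)|\le M_k r$ (rather than the obvious $|f_k(z_2)|\le M_k$) is what produces the $kM_k r^k$ summand that matches the definition of $\phi$ in Lemma 2.3, and without this refinement the radius coming out of the argument would be strictly smaller than the claimed $r_1$.
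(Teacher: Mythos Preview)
Your proposal is correct and follows essentially the same route as the paper: isolate $f_0$ and treat the $\bar z^k f_k$ terms as perturbations, invoking Lemma~2.1 for $f_0$, Theorem~F for $|f_k'|$, and the Schwarz lemma for $|f_k|$, then appeal to Lemma~2.3. The only cosmetic difference is that you bound $\bar z_1^k f_k(z_1)-\bar z_2^k f_k(z_2)$ via the algebraic splitting $\bar z_1^k\bigl(f_k(z_1)-f_k(z_2)\bigr)+f_k(z_2)\bigl(\bar z_1^k-\bar z_2^k\bigr)$, whereas the paper writes it as the line integral $\int_{[z_1,z_2]}\bigl(\bar z^k f_k'(z)\,dz+k\bar z^{k-1}f_k(z)\,d\bar z\bigr)$; both yield the identical estimate $r^k\bigl(M_k/(1-r^2)+kM_k\bigr)|z_1-z_2|$.
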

\begin{proof}
We first prove that $F$ is univalent in $\ID_{r_1}$. Let $z_1,z_2 \in \ID_r (0<r<r_1)$ be two distinct points, then
\begin{align*}
 |F(z_1)-F(z_2)|&=\left|\sum_{k=0}^{m-1} \bar{z}_1^k f_k(z_1)-\sum_{k=0}^{m-1} \bar{z}_2^k f_k(z)\right| \\
& \geq|f_0(z_1)-f_0(z_2)|-\left|\sum_{k=1}^{m-1} \bar{z}_1^k f_k(z_1)-\sum_{k=1}^{m-1} \bar{z}_2^k f_k(z_2)\right| .
\end{align*}
Since $F_z(0)=f_0'(0)=1, |f_0'(z)|<\Lambda$, by Lemma \ref{Rohi-Vasu-P4-Lemma-001}, we have 
\begin{equation}\label{Rohi-Vasu-P4-equation-002}
|f_0(z_1)-f_0(z_2)|\geq \Lambda \frac{1-\Lambda r}{\Lambda-r}|z_1-z_2|.
\end{equation}
Let $[z_1,z_2]$ be the line segment joining $z_1$ to $z_2$, then by Theorem \ref{Thm-F} and the  Schwarz lemma we have
\begin{align*}
\left|\sum_{k=1}^{m-1} \bar{z}_1^k f_k(z_1)-\sum_{k=1}^{m-1} \bar{z}_2^k f_k(z_2)\right|&= \left|\sum_{k=1}^{m-1} \int_{[z_1,z_2]} \left( \bar{z}^k f_k'(z)dz+k\bar{z}^{k-1} f_k(z)d\bar{z}\right)\right|\\
& \leq \sum_{k=1}^{m-1} \int_{[z_1,z_2]} |\bar{z}|^k |f_k'(z)||dz|\\
& \quad \quad +\sum_{k=1}^{m-1} \int_{[z_1,z_2]}k|\bar{z}|^{k-1} |f_k(z)||d\bar{z}|\\
& \leq |z_1-z_2|\left(\sum_{k=1}^{m-1} r^k \frac{M_k}{1-r^2}+\sum_{k=1}^{m-1}kr^{k-1}M_k r\right)\\
&= |z_1-z_2|\left(\sum_{k=1}^{m-1}r^k\left(\frac{M_k}{1-r^2}+kM_k\right)\right).
\end{align*}
Therefore by Lemma \ref{Rohi-Vasu-P4-Lemma-003} and \eqref{Rohi-Vasu-P4-equation-002} and, we have 
\begin{align*}
|F(z_1)-F(z_2)| & \geq  |z_1-z_2|\left[\frac{\Lambda (1-\Lambda r)}{\Lambda-r}-\sum_{k=1}^{m-1}r^k\left(\frac{M_k}{1-r^2}+kM_k\right)\right]\\ 
& >  |z_1-z_2| \left[\frac{\Lambda(1-\Lambda r_1)}{\Lambda-r_1}-\sum_{k=1}^{m-1}r_1^k\left(\frac{M_k}{1-r_1^2}+kM_k\right)\right]=0.
\end{align*}
Thus $F(z_1)\neq F(z_2)$, which implies the univalence of  $F$ in $ \ID_{r_1}$.

\vspace{2mm}
To prove the second part of the theorem, let $z\in \partial \ID_{r_1},$ by Lemma \ref{Rohi-Vasu-P4-Lemma-001} and the Schwarz lemma, we have
\begin{align*}
|F(z)|&\geq |f_0(z)|-\sum_{k=1}^{m-1} |\bar{z}|^k|f_k(z)|\\
&\geq \Lambda^2 r_1 + (\Lambda^3-\Lambda)\ln \left(1-\frac{r_1}{\Lambda}\right)-\sum_{k=1}^{m-1} r_1^kM_kr_1\\
&= \Lambda^2 r_1 + (\Lambda^3-\Lambda)\ln \left(1-\frac{r_1}{\Lambda}\right)-\sum_{k=1}^{m-1} r_1^{k+1}M_k= R_1>0.
\end{align*}
Hence, $F(\ID_{r_1})$ contains a disc of positive radius $R_1$.

\vspace{2mm}
Finally, when $M_k=0, k=1,2,...,m-1,$ $F(z)=f_0(z)$ is a holomorphic mapping, it follows from \cite[Theorem 1]{Liu-Chen-2018} that the result is sharp. This completes the proof.
\end{proof}
Now, we generalize Theorem \ref{Thm-E} by establishing the following result for the subclass $\mathcal{F}_2$ of poly-analytic functions.
\begin{thm}\label{Rohi-Vasu-P4-Theorem-002}
Suppose that $m$ is a positive integer. Let $F(z)=\sum_{k=0}^{m-1}\bar{z}^kf_k(z)$ be a poly-analytic function of order $m$ on the unit disc $\ID$, where all the $f_k$ are holomorphic on $\ID$, satisfying $f_k(0)=F_z(0)-1=0$ for $k\in \{0,1,\cdots,m-1\}.$ If $|f_0(z)|< M$ and $|f_k'(z)|\leq \Lambda_k$, $k\in \{1,2,\cdots,m-1\}$ for all $z\in \ID$, then
$\Lambda_k\geq 0, M\geq 1$, $F(z)$ is univalent in $\ID_{r_2}$ and $F(\ID_{r_2})$ contains a schlicht disc $\ID_{R_2}$, where $r_2$ is the unique root in $(0,1)$ of the equation
$$
1-\left(M-\frac{1}{M}\right)\frac{2r-r^2}{(1-r)^2}-\sum_{k=1}^{m-1}(k+1)r^k\Lambda_k=0
$$
and 
$$
R_2=r_2-\left(M-\frac{1}{M}\right)\frac{r_2^2}{1-r_2}-\sum_{k=1}^{m-1}r^{k+1}\Lambda_k.
$$

\end{thm}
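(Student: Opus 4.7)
The plan is to mimic the strategy used for Theorem \ref{Rohi-Vasu-P4-Theorem-001}, but with Lemma \ref{Rohi-Vasu-P4-Lemma-002} playing the role that Lemma \ref{Rohi-Vasu-P4-Lemma-001} played before, since the hypothesis on $f_0$ now controls $|f_0(z)|$ rather than $|f_0'(z)|$. First, $\Lambda_k \ge 0$ is automatic as it bounds a modulus, and $M \ge 1$ follows immediately by applying Lemma \ref{Rohi-Vasu-P4-Lemma-002} to $f_0(z) = z + \sum_{n\ge 2} a_n z^n$, which also gives the coefficient bound $|a_n| \le M - \tfrac{1}{M}$ for $n \ge 2$.

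For the univalence, I would fix distinct points $z_1, z_2 \in \ID_r$ with $0<r<r_2$ and split
$$F(z_1) - F(z_2) = \bigl(f_0(z_1) - f_0(z_2)\bigr) + \sum_{k=1}^{m-1}\bigl(\bar{z}_1^{\,k} f_k(z_1) - \bar{z}_2^{\,k} f_k(z_2)\bigr).$$
The analytic part is bounded below via the coefficient estimate:
$$|f_0(z_1) - f_0(z_2)| \ge |z_1-z_2|\Bigl[1 - \bigl(M-\tfrac{1}{M}\bigr)\sum_{n=2}^\infty n r^{n-1}\Bigr] = |z_1-z_2|\Bigl[1 - \bigl(M-\tfrac{1}{M}\bigr)\tfrac{2r-r^2}{(1-r)^2}\Bigr].$$
For the remaining sum I would parametrise along $[z_1,z_2]$ exactly as in the proof of Theorem \ref{Rohi-Vasu-P4-Theorem-001}, use $|f_k'(z)| \le \Lambda_k$ directly and use the Schwarz lemma (via $f_k(0)=0$) to get $|f_k(z)| \le \Lambda_k |z|$, producing the upper bound $|z_1-z_2|\sum_{k=1}^{m-1}(k+1) r^k \Lambda_k$. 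Combining yields $|F(z_1)-F(z_2)| \ge |z_1-z_2|\,\psi(r)$ where $\psi$ denotes the left-hand side of the defining equation for $r_2$. Since $\psi(0)=1$, $\psi$ is strictly decreasing on $(0,1)$, and $\psi(r) \to -\infty$ as $r\to 1^-$, there is a unique zero $r_2\in(0,1)$ and $\psi(r)>0$ for $r<r_2$, giving univalence on $\ID_{r_2}$.

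For the schlicht-disc statement, for $z \in \partial \ID_{r_2}$ the coefficient bound gives $|f_0(z)| \ge r_2 - (M-\tfrac{1}{M})\tfrac{r_2^2}{1-r_2}$, while Schwarz again yields $|\bar{z}^k f_k(z)| \le \Lambda_k r_2^{k+1}$. Summing, $|F(z)| \ge R_2$. The nontrivial point—and the main obstacle—is verifying $R_2 > 0$, which is the analogue of the second assertion of Lemma \ref{Rohi-Vasu-P4-Lemma-003}. I would prove this as a short auxiliary claim: from $\psi(r_2) = 0$ and the elementary inequalities $\tfrac{2r-r^2}{(1-r)^2} > \tfrac{r}{1-r}$ on $(0,1)$ and $(k+1)r^k \ge r^k$ for $k\ge 1$, one obtains
$$1 > \bigl(M-\tfrac{1}{M}\bigr)\tfrac{r_2}{1-r_2} + \sum_{k=1}^{m-1} r_2^k \Lambda_k,$$
and multiplying by $r_2$ gives $R_2 > 0$.

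I expect the routine computations (geometric series, Schwarz) to go through essentially verbatim from the proof of Theorem \ref{Rohi-Vasu-P4-Theorem-001}; the single substantive new point is the positivity of $R_2$, which is handled by the elementary comparison above rather than by invoking Lemma \ref{Rohi-Vasu-P4-Lemma-003} directly, since that lemma is tailored to the $|f_0'|<\Lambda$ hypothesis rather than the $|f_0|<M$ hypothesis used here.
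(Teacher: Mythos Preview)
Your proposal is correct and follows essentially the same route as the paper's proof: use Lemma~\ref{Rohi-Vasu-P4-Lemma-002} for the coefficient bound on $f_0$, parametrise the higher-order terms along $[z_1,z_2]$, and show the auxiliary function $\psi_1$ is strictly decreasing with a unique zero. Two small remarks: first, the bound $|f_k(z)|\le \Lambda_k|z|$ is not the Schwarz lemma but simply $|f_k(z)|=\big|\int_{[0,z]} f_k'(\zeta)\,d\zeta\big|\le \Lambda_k|z|$ (this is exactly how the paper obtains it, equation~\eqref{Rohi-Vasu-P4-equation-003}); second, the paper handles $R_2>0$ by noting that $\int_0^{r_2}\psi_1(r)\,dr=R_2$ (the antiderivative of $(2r-r^2)/(1-r)^2$ is $r^2/(1-r)$), which is the direct analogue of the Lemma~\ref{Rohi-Vasu-P4-Lemma-003} argument---your pointwise comparison at $r=r_2$ is equally valid and slightly more elementary.
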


\begin{proof}
By the hypothesis of Theorem \ref{Rohi-Vasu-P4-Theorem-002}, for each $k\in \{1, 2,\cdots,m-1\}$, we have 
\begin{equation}\label{Rohi-Vasu-P4-equation-003}
|f_k(z)|=\left|\int_{[0,z]}f_k'(z)dz\right|\leq \int_{[0,z]}|f_k'(z)||dz|\leq \Lambda_k|z|, \quad z\in \ID.
\end{equation} 
 Let $f_0(z)=\sum_{n=1}^{\infty}a_{n,0}z^n$ be the series representation of $f_0(z)$ and $[z_1,z_2]$ denote the line segment joining $z_1$ and $z_2$. Then for any $z_1,z_2 \in \ID_r(0<r<r_2)$ with $z_1 \neq z_2$, we have 
\begin{align*}
|F(z_1)-F(z_2)|&=\left|\int_{[z_1,z_2]}F_z(z)dz+F_{\bar{z}}(z)d\bar{z}\right|\\
&=\left|\int_{[z_1,z_2]}F_z(0)dz+F_{\bar{z}}(0)d\bar{z} \right. \\
&\quad \quad +\left. \int_{[z_1,z_2]}\left(F_z(z)-F_z(0)\right)dz+\left(F_{\bar{z}}(z)-F_{\bar{z}}(0)\right)d\bar{z}\right|\\
&= \left|\int_{[z_1,z_2]}dz+ \int_{[z_1,z_2]}(f_0'(z)-1)dz \right. \\
&\quad \quad + \left. \int_{[z_1,z_2]}\sum_{k=1}^{m-1}\left[\bar{z}^kf_k'(z)dz+k\bar{z}^{k-1}f_k(z)d\bar{z}\right] \right|\\
& \geq  |z_1-z_2|- \int_{[z_1,z_2]} \sum_{n=2}^{\infty}n|a_{n,0}|r^{n-1}|dz| \\
&\quad \quad-\int_{[z_1,z_2]}\left(\sum_{k=1}^{m-1}|\bar{z}|^k|f_k'(z)|+k|\bar{z}|^{k-1}|f_k(z)|\right)|d\bar{z}|.
\end{align*}
By the equation \eqref{Rohi-Vasu-P4-equation-003} and Lemma \ref{Rohi-Vasu-P4-Lemma-002}, we have
\begin{align*}
|F(z_1)-F(z_2)|& \geq |z_1-z_2|- \int_{[z_1,z_2]}\sum_{n=2}^{\infty}n\left(M-\frac{1}{M}\right)r^{n-1}|dz|\\
& \quad \quad-\int_{[z_1,z_2]}\left(\sum_{k=1}^{m-1}\Lambda_kr^k+kr^{k-1}\Lambda_kr\right)|d\bar{z}|\\
& \geq |z_1-z_2|\left(1-\sum_{n=2}^{\infty}nr^{n-1}\left(M-\frac{1}{M}\right)-\sum_{k=1}^{m-1}(k+1)r^k\Lambda_k\right)\\
&= |z_1-z_2|\left(1-\left(M-\frac{1}{M}\right)\frac{2r-r^2}{(1-r)^2}-\sum_{k=1}^{m-1}(k+1)r^k\Lambda_k\right).
\end{align*}
For $r\in [0,1)$, consider the continuous function
$$
\psi_1(r)=1-\left(M-\frac{1}{M}\right)\frac{2r-r^2}{(1-r)^2}-\sum_{k=1}^{m-1}(k+1)r^k\Lambda_k.
$$
Similar to the proof of Lemma \ref{Rohi-Vasu-P4-Lemma-003}, we can verify that $\psi_1(r)$ is a strictly decreasing function in $[0,1]$ and there exists  unique real number $r_2\in (0,1)$ such that $\psi_1(r_2)=0$.\vspace{2mm}
Thus, we have 
$$
|F(z_1)-F(z_2)|\geq \psi_1(r)>\psi_1(r_2)=0.
$$
This implies $F(z_1)\neq F(z_2),$ which proves the univalence of $F$ in the disc $\ID_{r_2}$.

\vspace{2mm}
To prove the second part of of the theorem, let $z\in \partial \ID_{r_2},$ by  Lemma \ref{Rohi-Vasu-P4-Lemma-002} and equation \eqref{Rohi-Vasu-P4-equation-003}
\begin{align*}
|F(z)|&=\left|\sum_{k=0}^{m-1}\bar{z}^k f_k(z)\right|\\
& =\left|\sum_{n=1}^{\infty}a_{n,0}z^n+\sum_{k=1}^{m-1}\bar{z}^k f_k(z)\right|\\
 & \geq |a_{1,0}z|-\left|\sum_{n=2}^{\infty}a_{n,0}z^n\right|-\left|\sum_{k=1}^{m-1}\bar{z}^k f_k(z)\right|\\
& \geq  r_2-\sum_{n=2}^{\infty}|a_{n,0}|r_2^n-\sum_{k=1}^{m-1}r_2^k|f_k(z)|\\
& \geq r_2-\sum_{n=2}^{\infty}\left(M-\frac{1}{M}\right)r_2^n-\sum_{k=1}^{m-1}r_2^{k+1}\Lambda_k\\
&= r_2- \left(M-\frac{1}{M}\right)\frac{r_2^2}{1-r_2}-\sum_{k=1}^{m-1}r_2^{k+1}\Lambda_k=R_2.
\end{align*}
By following the same method as in the proof of Lemma \ref{Rohi-Vasu-P4-Lemma-003}, we can show that $R_2$  is strictly positive. Hence, $F(\ID_{r_2})$ contains a disc of positive radius $R_2$. This completes the proof.
\end{proof}

Finally, we obtain a sharp result for the Landau-type theorem for subclass $\mathcal{F}_3$ of poly-analytic functions, which generalizes the results of Theorem \ref{Thm-D}.
\begin{thm}\label{Rohi-Vasu-P4-Theorem-003}
Suppose that $m$ is a positive integer, $\Lambda_1, \cdots, \Lambda_{m-1} \geq 0$ and $\Lambda_0>1$. Let $F(z)=\sum_{k=0}^{m-1}\bar{z}^k f_k(z) $ be a poly-analytic function of unit disc $\ID$, where all $f_k$ are holomorphic on $\ID$, satisfying $f_k(0)=F_z(0)-1=0$ for $k\in \{0,1,\cdots,m-1\}$. If $|f_0'(z)|<\Lambda_0$ and $|f_k'(z)| \leq \Lambda_k, k \in\{1,2, \cdots, m-1\}$ for all $z \in \ID$, then $F(z)$ is univalent in $\ID_{r_3}$, and $F\left(\ID_{r_3}\right)$ contains a schlicht disc $\ID_{R_3}$, where $r_3$ is the unique root in $(0,1)$ of the following equation:
$$
\Lambda_0 \frac{1-\Lambda_0 r}{\Lambda_0-r}-\sum_{k=1}^{m-1}(k+1) \Lambda_k r^k=0
$$
and
$$
R_3=\Lambda_0^2 r_3+\left(\Lambda_0^3-\Lambda_0\right) \ln \left(1-\frac{r_3}{\Lambda_0}\right)-\sum_{k=1}^{m-1} \Lambda_k r_3^{k+1}.
$$
Moreover, these estimates are sharp, with an extremal function given by
\begin{equation}\label{Rohi-Vasu-P4-equation-004}
F_0(z)=\Lambda_0^2 z+\left(\Lambda_0^3-\Lambda_0\right) \ln \left(1-\frac{z}{\Lambda_0}\right)-\sum_{k=1}^{m-1} \bar{z}^k \Lambda_k z, z \in \ID.
\end{equation}
\end{thm}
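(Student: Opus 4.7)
The plan is to adapt the template of Theorem \ref{Rohi-Vasu-P4-Theorem-001} to the present hypothesis on $f_k'$ rather than $f_k$. The first useful observation is that, since $f_k(0)=0$, integrating $|f_k'(z)|\leq \Lambda_k$ along $[0,z]$ immediately yields $|f_k(z)|\leq \Lambda_k|z|$ for every $z\in \ID$ and $k\in\{1,\dots,m-1\}$; this bypasses the Bloch--Colonna estimate and produces a cleaner polynomial as the error term.

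For univalence in $\ID_{r_3}$, I would fix distinct $z_1,z_2\in \ID_r$ with $r<r_3$ and split
$$|F(z_1)-F(z_2)|\geq |f_0(z_1)-f_0(z_2)|-\Bigl|\sum_{k=1}^{m-1}\bigl(\bar{z}_1^kf_k(z_1)-\bar{z}_2^kf_k(z_2)\bigr)\Bigr|.$$
Lemma \ref{Rohi-Vasu-P4-Lemma-001}(1), applied to $f_0$ (valid since $f_0'(0)=1$, $|f_0'|<\Lambda_0$, $\Lambda_0>1$), bounds the first term below by $\Lambda_0\frac{1-\Lambda_0 r}{\Lambda_0-r}|z_1-z_2|$. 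For the second term, I would parametrize along $[z_1,z_2]$,
$$\bar{z}_1^kf_k(z_1)-\bar{z}_2^kf_k(z_2)=\int_{[z_1,z_2]}\bigl(\bar{z}^kf_k'(z)\,dz+k\bar{z}^{k-1}f_k(z)\,d\bar{z}\bigr),$$
and use $|f_k'|\leq \Lambda_k$ and $|f_k(z)|\leq \Lambda_k r$ to obtain the upper bound $|z_1-z_2|\sum_{k=1}^{m-1}(k+1)\Lambda_k r^k$. Setting $\psi(r)=\Lambda_0(1-\Lambda_0 r)/(\Lambda_0-r)-\sum_{k=1}^{m-1}(k+1)\Lambda_k r^k$, the computation $\psi'(r)=(\Lambda_0-\Lambda_0^3)/(\Lambda_0-r)^2-\sum_{k=1}^{m-1}k(k+1)\Lambda_k r^{k-1}<0$ on $(0,1)$, together with $\psi(0)=1$ and $\psi(1/\Lambda_0)\leq 0$, produces a unique root $r_3\in(0,1/\Lambda_0]\subset(0,1)$; consequently $|F(z_1)-F(z_2)|>0$ on $\ID_{r_3}$.

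For the covering part, for $z\in \partial\ID_{r_3}$ the triangle inequality together with Lemma \ref{Rohi-Vasu-P4-Lemma-001}(2) gives
$$|F(z)|\geq \Lambda_0^2 r_3+(\Lambda_0^3-\Lambda_0)\ln\!\Bigl(1-\tfrac{r_3}{\Lambda_0}\Bigr)-\sum_{k=1}^{m-1}\Lambda_k r_3^{k+1}=R_3.$$
Positivity of $R_3$ follows by integrating the strict inequality $\psi(r)>0$ on $(0,r_3)$ and observing that an antiderivative of $\Lambda_0(1-\Lambda_0 r)/(\Lambda_0-r)$ is exactly $\Lambda_0^2 r+(\Lambda_0^3-\Lambda_0)\ln(1-r/\Lambda_0)$, in the spirit of Lemma \ref{Rohi-Vasu-P4-Lemma-003}. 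This identity is the key bridge linking the univalence radius $r_3$ with the covering radius $R_3$.

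Sharpness with the candidate $F_0$ in \eqref{Rohi-Vasu-P4-equation-004} is the most delicate step. Direct differentiation gives $f_0'(z)=\Lambda_0(1-\Lambda_0 z)/(\Lambda_0-z)$; the identity $|\Lambda_0-z|^2-|1-\Lambda_0 z|^2=(\Lambda_0^2-1)(1-|z|^2)$ confirms $f_0'(0)=1$ and $|f_0'(z)|<\Lambda_0$ on $\ID$, while $f_k(z)=\Lambda_k z$ trivially realizes the bound on $|f_k'|$. Restricting $F_0$ to the positive real axis yields $\tfrac{d}{dr}F_0(r)=\psi(r)$, so $F_0$ is strictly increasing on $[0,r_3]$ with $F_0(r_3)=R_3$ and strictly decreasing immediately beyond $r_3$. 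The first fact implies that $R_3\in\partial F_0(\ID_{r_3})$, preventing $F_0(\ID_{r_3})$ from containing any strictly larger disc about the origin; the second supplies two distinct real points in any slightly larger disc with the same $F_0$-value, so univalence fails beyond $\ID_{r_3}$. I expect the hardest piece of bookkeeping to be this final sharpness analysis, together with the verification that the antiderivative identity is consistent with the formula for $R_3$.
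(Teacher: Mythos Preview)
Your proposal is correct and follows essentially the same route as the paper: the same splitting of $|F(z_1)-F(z_2)|$, the same use of Lemma~\ref{Rohi-Vasu-P4-Lemma-001} for $f_0$, the integration bound $|f_k(z)|\le\Lambda_k|z|$, the same monotone auxiliary function $\psi$, and the same real-axis analysis of $F_0$ for sharpness. One trivial slip: in the extremal function the components are $f_k(z)=-\Lambda_k z$, not $+\Lambda_k z$, though this does not affect $|f_k'|=\Lambda_k$.
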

\begin{proof}
We first prove that $F$ is univalent in $\ID_{r_3}$. Let $z_1,z_2 \in \ID_r (0<r<r_3)$ be two distinct points, then
\begin{align*}
 |F(z_1)-F(z_2)|&=\left|\sum_{k=0}^{m-1} \bar{z}_1^k f_k(z_1)-\sum_{k=0}^{m-1} \bar{z}_2^k f_k(z)\right| \\
& \geq|f_0(z_1)-f_0(z_2)|-\left|\sum_{k=1}^{m-1} \bar{z}_1^k f_k(z_1)-\sum_{k=1}^{m-1} \bar{z}_2^k f_k(z_2)\right| .
\end{align*}
Since $F_z(0)=f_0'(0)=1, |f_0'(z)|<\Lambda_0$, in view of Lemma \ref{Rohi-Vasu-P4-Lemma-001}, we have 
\begin{equation*}
|f_0(z_1)-f_0(z_2)|\geq \Lambda_0 \frac{1-\Lambda_0 r}{\Lambda_0-r}|z_1-z_2|.
\end{equation*}
Since $|f_k'(z)|\leq \Lambda_k$ for each $k\in \{1,2,,\cdots,m-1\}$, we have from \eqref{Rohi-Vasu-P4-equation-003} that $|f_k(z)|\leq \Lambda_k|z|,\  z\in \ID.$
Let $[z_1,z_2]$ denote the line segment joining $z_1$ and $z_2$, then we have 
\begin{align*}\label{Rohi-Vasu-P4-equation-005}
\left|\sum_{k=1}^{m-1} \bar{z}_1^k f_k(z_1)-\sum_{k=1}^{m-1} \bar{z}_2^k f_k(z_2)\right|&= \left|\sum_{k=1}^{m-1} \int_{[z_1,z_2]} \left( \bar{z}^k f_k'(z)dz+k\bar{z}^{k-1} f_k(z)d\bar{z}\right)\right|\\
& \leq \sum_{k=1}^{m-1} \int_{[z_1,z_2]} |\bar{z}|^k |f_k'(z)||dz|\\
& \quad \quad +\sum_{k=1}^{m-1} \int_{[z_1,z_2]}k|\bar{z}|^{k-1} |f_k(z)||d\bar{z}|\\
& \leq \sum_{k=1}^{m-1} \int_{[z_1,z_2]}r^k \Lambda_k |dz|+\sum_{k=1}^{m-1} \int_{[z_1,z_2]}kr^{k-1}\Lambda_k r |d\bar{z}|\\
&= \left(\sum_{k=1}^{m-1}(k+1)\Lambda_kr^k\right)|z_1-z_2|.\numberthis
\end{align*}
Therefore, we have
$$|F(z_1)-F(z_2)|\geq \left(\frac{\Lambda_0(1-\Lambda_0 r)}{\Lambda_0-r}-\sum_{k=1}^{m-1}(k+1)\Lambda_kr^k \right)|z_1-z_2|.$$
For $r\in [0,1]$, consider the following continuous function
$$
\psi_2(r)=\frac{\Lambda_0(1-\Lambda_0 r)}{\Lambda_0-r}-\sum_{k=1}^{m-1}(k+1)\Lambda_kr^k.
$$
Similar to the proof of Lemma \ref{Rohi-Vasu-P4-Lemma-003}, we can verify that $\psi_2(r)$ is a strictly decreasing function in $[0,1]$. Therefore, there exists unique real number $r_3\in (0,1)$ such that $\psi_2(r_3)=0$. Thus, we have 
$$
|F(z_1)-F(z_2)|\geq \psi_2(r)>\psi_2(r_3)=0.
$$
This implies $F(z_1)\neq F(z_2),$ which proves the univalence of $F$ in the disc $\ID_{r_3}$.

\vspace{2mm}
Next, for any $z\in \partial \ID_{r_3}$ by Lemma \ref{Rohi-Vasu-P4-Lemma-001} and equation \eqref{Rohi-Vasu-P4-equation-003}, we have
\begin{align*}
|F(z)|&=\left|\sum_{k=0}^{m-1}\bar{z}^k f_k(z)\right|\\
&\geq |f_0(z)|-\sum_{k=1}^{m-1}|\bar{z}|^k |f_k(z)|\\
& \geq \Lambda_0^2r_3+(\Lambda_0^3-\Lambda_0)\ln\left(1-\frac{r_3}{\Lambda_0}\right)-\sum_{k=1}^{m-1}r_3^{k+1}\Lambda_k=R_3.
\end{align*}
We can easily see that the quantity $R_3$ is strictly positive. This shows that $ F(\ID_{r_3})$ contains a Schlicht disc $\ID_{R_3}$.

\vspace{2mm}
Now, to prove the sharpness of $r_3$ and $R_3$, we consider a poly-analytic function $F_0(z)$ given by \eqref{Rohi-Vasu-P4-equation-004}. It is easy to see that $F_0(z)$ satisfies all the conditions of the Theorem \ref{Rohi-Vasu-P4-Theorem-003}. Hence $F_0(z)$ is univalent in $\ID_{r_3}$ and $F_0(\ID_{r_3})\supseteq \ID_{R_3}$. 

\vspace{2mm}
To prove the univalent radius $r_3$ is sharp, we need to prove that $F_0(z)$ is not univalent in $\ID_r$ for each $r\in (r_3,1]$. Indeed, let
\begin{equation*}
g(x)=\Lambda_0^2x+(\Lambda_0^3-\Lambda_0)\ln\left(1-\frac{x}{\Lambda_0}\right)-\sum_{k=1}^{m-1}x^{k+1}\Lambda_k, \quad x\in [0,1].
\end{equation*}
Since, the continuous function
 \begin{equation*}
  g'(x)=\frac{\Lambda_0(1-\Lambda_0 x)}{\Lambda_0-x}-\sum_{k=1}^{m-1}(k+1)\Lambda_kx^k=\psi_2(x) 
  \end{equation*} 
 is strictly decreasing on $[0,1]$ and $g'(r_3)=\psi_2(r_3)=0$, we obtain that $g'(x)=0$ if, and only if, $x=r_3.$ Therefore, $g(x)$ is strictly increasing on $[0,r_3]$ and strictly decreasing on $[r_3,1]$. We see that if $g(1)\leq 0$, then there is unique real $r'\in (r_3,1] $ such that $g(r')=0.$

\vspace{2mm}
For every fixed $r\in (r_3,1]$, we choose
\begin{equation*}
\varepsilon=\begin{cases} \min \left\{\dfrac{r-r_3}{2}, \dfrac{r'-r_3}{2}\right\}, & \text { if } g(1)\leq 0, \vspace{2mm}\\ \dfrac{r-r_3}{2}, & \text { if } g(1)>0 . \end{cases}
\end{equation*}
For such $\varepsilon$, we let $x_1=r_3+\varepsilon$ and there exists unique $\delta\in (0, r_3)$ such that $x_2:=r_3-\delta\in (0,r_3)$ and $g(x_1)=g(x_2).$ By setting $z_1=x_1$ and $z_2=x_2$, we see that $z_2,z_2\in \ID_r$ with $z_2\neq z_2$ and 
$$
F_0(z_1)=F_0(x_1)=g(x_1)=g(x_2)=F_0(x_2)=F_0(z_2).
$$ 
This implies that $F_0$ is not univalent in $\ID_r$ for each $r\in (r_3,1]$, which proves the sharpness of $r_3$.

\vspace{2mm}
 Finally, to prove the sharpness of $R_3$, we take $z'=r_3\in \partial\ID_{r_3}$, then we have
 $$
 |F_0(z')-F_0(0)|=|F_0(r_3)|=g(r_3)=R_3.
 $$
Hence, the radius $R_3$ of the schlicht disc is also sharp. This completes the proof.
\end{proof}

\section{\textbf{The Bi-Lipschitz Theroems for  Some Poly-analytic functions}}
In this section, we investigate the Lipschitz character of subclasses $\mathcal{F}_1, \mathcal{F}_2$ and $\mathcal{F}_3$ of poly-analytic functions on their univalent discs.

\begin{thm}\label{Rohi-Vasu-P4-Theorem-004}
Suppose $F(z)$ satisfies the hypothesis of Theorem \ref{Rohi-Vasu-P4-Theorem-001}. Then for each $\rho_1 \in (0, r_1)$, the poly-analytic function $F(z)$ is bi-Lipschitz on $\overline{\ID}_{\rho_1}$, where $r_1$ is given by Theorem \ref{Rohi-Vasu-P4-Theorem-001}.
\end{thm}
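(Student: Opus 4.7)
The plan is to obtain both the Lipschitz and the co-Lipschitz estimates by re-running the two sides of the calculation from the proof of Theorem \ref{Rohi-Vasu-P4-Theorem-001} with the free radius $r$ frozen at the value $\rho_1 \in (0, r_1)$. The point is that the proof of Theorem \ref{Rohi-Vasu-P4-Theorem-001} already produces, on $\overline{\ID}_{\rho_1}$, the bilateral bound
$$c_-(\rho_1)\,|z_1 - z_2| \leq |F(z_1) - F(z_2)| \leq c_+(\rho_1)\,|z_1 - z_2|,$$
for explicit constants depending only on $\rho_1$, $\Lambda$, and the $M_k$; one only needs to read off and isolate the two constants.

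For the co-Lipschitz bound, I would take arbitrary $z_1, z_2 \in \overline{\ID}_{\rho_1}$ and repeat the lower-bound chain in the first half of the proof of Theorem \ref{Rohi-Vasu-P4-Theorem-001}, using $|z_1|, |z_2| \leq \rho_1$ throughout. This gives
$$|F(z_1) - F(z_2)| \geq \left[\frac{\Lambda(1 - \Lambda\rho_1)}{\Lambda - \rho_1} - \sum_{k=1}^{m-1} \rho_1^k\left(\frac{M_k}{1-\rho_1^2} + k M_k\right)\right]|z_1 - z_2| = \phi(\rho_1)\,|z_1 - z_2|,$$
where $\phi$ is the function introduced in Lemma \ref{Rohi-Vasu-P4-Lemma-003}. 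Since $\phi$ is strictly decreasing on $[0,1)$ and vanishes exactly at $r_1$, the hypothesis $\rho_1 < r_1$ forces $\phi(\rho_1) > 0$, so $l_1 := \phi(\rho_1)$ serves as the co-Lipschitz constant.

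For the Lipschitz bound, I would split, via the triangle inequality,
$$|F(z_1) - F(z_2)| \leq |f_0(z_1) - f_0(z_2)| + \Bigl|\sum_{k=1}^{m-1}\bigl(\bar z_1^{k} f_k(z_1) - \bar z_2^{k} f_k(z_2)\bigr)\Bigr|.$$
The first summand is at most $\Lambda|z_1-z_2|$, by integrating $|f_0'| < \Lambda$ along the segment $[z_1,z_2] \subset \overline{\ID}_{\rho_1}$. For the second summand, I would parametrise along $[z_1,z_2]$ exactly as in the proof of Theorem \ref{Rohi-Vasu-P4-Theorem-001}, and use Theorem \ref{Thm-F} applied to $f_k/M_k : \ID \to \overline{\ID}$ to obtain $|f_k'(z)| \leq M_k/(1-|z|^2)$, together with the Schwarz lemma to obtain $|f_k(z)| \leq M_k|z|$. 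This yields the majorisation
$$\Bigl|\sum_{k=1}^{m-1}\bigl(\bar z_1^{k} f_k(z_1) - \bar z_2^{k} f_k(z_2)\bigr)\Bigr| \leq \sum_{k=1}^{m-1} \rho_1^k\left(\frac{M_k}{1-\rho_1^2} + kM_k\right)|z_1 - z_2|,$$
so one may take the Lipschitz constant
$$L_1 := \Lambda + \sum_{k=1}^{m-1} \rho_1^k\left(\frac{M_k}{1-\rho_1^2} + kM_k\right).$$

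I do not anticipate any serious analytic obstacle here: the whole argument is a two-sided repackaging of the estimates already in Theorem \ref{Rohi-Vasu-P4-Theorem-001}. The only point that requires attention is the strict positivity $l_1 > 0$, which is secured precisely by the strict monotonicity of $\phi$ established in Lemma \ref{Rohi-Vasu-P4-Lemma-003} together with $\rho_1 < r_1$.
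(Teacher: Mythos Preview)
Your proposal is correct and follows essentially the same route as the paper: the co-Lipschitz constant $l_1=\phi(\rho_1)$ is read off from the lower estimate in the proof of Theorem \ref{Rohi-Vasu-P4-Theorem-001} (positivity coming from the strict monotonicity of $\phi$ in Lemma \ref{Rohi-Vasu-P4-Lemma-003} together with $\rho_1<r_1$), and the Lipschitz constant $L_1=\Lambda+\sum_{k=1}^{m-1}\rho_1^k\bigl(\tfrac{M_k}{1-\rho_1^2}+kM_k\bigr)$ is obtained by the same triangle-inequality split, bounding $|f_0(z_1)-f_0(z_2)|$ via $|f_0'|<\Lambda$ on the segment and reusing the upper estimate for the remaining sum. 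This matches the paper's argument and constants exactly.
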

\begin{proof}
Fix $\rho_1 \in (0,r_1)$ and let
$$
l_1=\frac{\Lambda(1-\Lambda \rho_1)}{\Lambda-\rho_1}-\sum_{k=1}^{m-1}\rho_1^k\left(\frac{M_k}{1-\rho_1^2}+kM_k\right).
$$
Then for any $z_1, z_2 \in \overline{\ID}_{\rho_1}$, it follows from the proof of Theorem \ref{Rohi-Vasu-P4-Theorem-001} and $|f_0'(z)|< \Lambda$ for $z \in \ID$ that $l_1>0$ and 
\begin{align*}
l_1|z_1-z_2|&=\left(\frac{\Lambda(1-\Lambda \rho_1)}{\Lambda-\rho_1}-\sum_{k=1}^{m-1}\rho_1^k\left(\frac{M_k}{1-\rho_1^2}+kM_k\right)\right)|z_1-z_2|\\
& \leq |F(z_1)-F(z_2)|\\
& \leq |f_0(z_1)-f_0(z_2)|+\left|\sum_{k=1}^{m-1} \bar{z}_1^k f_k(z_1)-\sum_{k=1}^{m-1} \bar{z}_2^k f_k(z_2)\right|\\
 & \leq \left|\int_{[z_1,z_2]}f_0'(z)dz\right|+|z_1-z_2|\sum_{k=1}^{m-1}\rho_1^k\left(\frac{M_k}{1-\rho_1^2}+kM_k\right)\\
 & \leq \left( \Lambda + \sum_{k=1}^{m-1}\rho_1^k\left(\frac{M_k}{1-\rho_1^2}+kM_k\right)\right) |z_1-z_2|.
\end{align*}
Hence $F(z)$ is bi-Lipschitz on $\ID_{\rho_1}$.
\end{proof}

\begin{thm}\label{Rohi-Vasu-P4-Theorem-005}
Suppose $F(z)$ satisfies the hypothesis of Theorem \ref{Rohi-Vasu-P4-Theorem-002}. Then for each $\rho_2 \in (0, r_2)$, the poly-analytic function $F(z)$ is bi-Lipschitz on $\overline{\ID}_{\rho_2}$, \it{i.e.}, for any $z_1,z_2 \in \overline{\ID}_{\rho_2}$, there exist $$ l_2=1-\left(M-\frac{1}{M}\right)\frac{2\rho_2-\rho_2^2}{(1-\rho_2)^2}-\sum_{k=1}^{m-1}(k+1)\rho_2^k\Lambda_k>0$$ and 
$$L_2= \frac{M}{1-\rho_2^2}+\sum_{k=1}^{m-1}\rho_2^k(k+1)\Lambda_k$$ such that 
$$l_2|z_1-z_2|\leq |F(z_1)-F(z_2)|\leq L_2|z_1-z_2|,$$
where $r_2$ is given by Theorem \ref{Rohi-Vasu-P4-Theorem-002}.
\end{thm}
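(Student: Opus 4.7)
The plan is to combine the lower-bound estimate already produced in the proof of Theorem \ref{Rohi-Vasu-P4-Theorem-002} with a Schwarz–Pick type upper bound on $|f_0'(z)|$ coming from Theorem \ref{Thm-F}.

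First I would extract the lower bound directly from the proof of Theorem \ref{Rohi-Vasu-P4-Theorem-002}. The auxiliary function
\[
\psi_1(r)=1-\left(M-\frac{1}{M}\right)\frac{2r-r^2}{(1-r)^2}-\sum_{k=1}^{m-1}(k+1)r^k\Lambda_k
\]
is strictly decreasing on $[0,1)$ and vanishes precisely at $r_2$, so for any $\rho_2\in(0,r_2)$ we have $l_2=\psi_1(\rho_2)>0$. For $z_1,z_2\in\overline{\ID}_{\rho_2}$ the segment $[z_1,z_2]$ lies in $\overline{\ID}_{\rho_2}$, hence rerunning the chain of inequalities in that proof with $\rho_2$ in place of $r$ yields
\[
|F(z_1)-F(z_2)|\geq \psi_1(\rho_2)\,|z_1-z_2|=l_2|z_1-z_2|.
\]

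For the upper bound I would split
\[
|F(z_1)-F(z_2)|\leq |f_0(z_1)-f_0(z_2)|+\left|\sum_{k=1}^{m-1}\bar z_1^k f_k(z_1)-\sum_{k=1}^{m-1}\bar z_2^k f_k(z_2)\right|.
\]
For the $k\geq 1$ tail, the identical integration-along-$[z_1,z_2]$ computation used in the proof of Theorem \ref{Rohi-Vasu-P4-Theorem-002}, combined with $|f_k(z)|\leq \Lambda_k|z|$ from \eqref{Rohi-Vasu-P4-equation-003} and the hypothesis $|f_k'(z)|\leq \Lambda_k$, produces the bound $\sum_{k=1}^{m-1}(k+1)\rho_2^k\Lambda_k\,|z_1-z_2|$. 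For the $f_0$ term, since $f_0/M$ is a holomorphic self-map of $\ID$, Theorem \ref{Thm-F} applied to $f_0/M$ gives $(1-|z|^2)|f_0'(z)|\leq M$, whence $|f_0'(z)|\leq M/(1-\rho_2^2)$ on $\overline{\ID}_{\rho_2}$; integrating $f_0'$ along the segment $[z_1,z_2]$ yields $|f_0(z_1)-f_0(z_2)|\leq \frac{M}{1-\rho_2^2}|z_1-z_2|$. Adding the two estimates delivers exactly $L_2|z_1-z_2|$.

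The only nontrivial step is the control of $|f_0(z_1)-f_0(z_2)|$: the hypothesis is a bound on $|f_0|$, not on $|f_0'|$, so a pointwise derivative estimate is required, and Theorem \ref{Thm-F} is the right instrument. Everything else is a direct repackaging of computations already carried out for Theorem \ref{Rohi-Vasu-P4-Theorem-002}, so no further obstacle is anticipated.
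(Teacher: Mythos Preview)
Your proposal is correct and follows essentially the same route as the paper: the lower bound is lifted verbatim from the univalence argument in Theorem \ref{Rohi-Vasu-P4-Theorem-002}, and the upper bound is obtained by splitting off $f_0$, applying Theorem \ref{Thm-F} to $f_0/M$ to get $|f_0'(z)|\leq M/(1-\rho_2^2)$ on $\overline{\ID}_{\rho_2}$, and bounding the tail by the same $(k+1)\rho_2^k\Lambda_k$ estimate. The only cosmetic difference is that the paper cites the tail bound via the displayed inequality \eqref{Rohi-Vasu-P4-equation-005} from the proof of Theorem \ref{Rohi-Vasu-P4-Theorem-003}, whereas you re-derive it from the proof of Theorem \ref{Rohi-Vasu-P4-Theorem-002}; the computation is identical in either case.
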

\begin{proof}
Fix $\rho_2 \in (0,r_2)$. Then for any $z_1, z_2 \in \overline{\ID}_{\rho_2}$, it follows from the proof of Theorem \ref{Rohi-Vasu-P4-Theorem-002} that $l_2>0$ and 
\begin{align*}
l_2|z_1-z_2| & \leq |F(z_1)-F(z_2)|\vspace{2mm}\\ 
& \leq |f_0(z_1)-f_0(z_2)|+\left|\sum_{k=1}^{m-1} \bar{z}_1^k f_k(z_1)-\sum_{k=1}^{m-1} \bar{z}_2^k f_k(z_2)\right| \vspace{2mm}\\
& = \left|\int_{[z_1,z_2]}f_0'(z)dz\right|+\left|\sum_{k=1}^{m-1} \bar{z}_1^k f_k(z_1)-\sum_{k=1}^{m-1} \bar{z}_2^k f_k(z_2)\right|
\end{align*}
Now since $|f_0(z)|<M$ and $|f_k'(z)|\leq \Lambda_k$, by Theorem \ref{Thm-F} and \eqref{Rohi-Vasu-P4-equation-005} we have
\begin{align*}
l_2|z_1-z_2| & \leq |F(z_1)-F(z_2)|\\ 
& \leq \left(\frac{M}{1-\rho_2^2}+\sum_{k=1}^{m-1}\rho_2^k(k+1)\Lambda_k\right)|z_1-z_2|=L_2|z_1-z_2|.
\end{align*}
This completes the proof.

\end{proof}
\begin{thm}\label{Rohi-Vasu-P4-Theorem-006}
Suppose $F(z)$ satisfies the hypothesis of Theorem \ref{Rohi-Vasu-P4-Theorem-003}. Then for each $\rho_3 \in (0, r_3)$, the poly-analytic function $F(z)$ is bi-Lipschitz on $\overline{\ID}_{\rho_3}$, \it{i.e.}, for any $z_1,z_2 \in \overline{\ID}_{\rho_3}$, there exist $$ l_3=\frac{\Lambda_0(1-\Lambda_0 \rho_3)}{\Lambda_0-\rho_3}-\sum_{k=1}^{m-1}(k+1)\Lambda_k\rho_3^k >0$$ and 
$$L_3= \Lambda+\sum_{k=1}^{m-1}\rho_3^k(k+1)\Lambda_k$$ such that 
$$l_3|z_1-z_2|\leq |F(z_1)-F(z_2)|\leq L_3|z_1-z_2|,$$
where $r_3$ is given by Theorem \ref{Rohi-Vasu-P4-Theorem-003}.
\end{thm}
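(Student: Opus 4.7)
The plan is to parallel the structure of Theorems \ref{Rohi-Vasu-P4-Theorem-004} and \ref{Rohi-Vasu-P4-Theorem-005}: establish the lower bi-Lipschitz bound by reading off the estimate already produced inside the proof of Theorem \ref{Rohi-Vasu-P4-Theorem-003}, and prove the upper bound by a direct triangle-inequality computation using the hypotheses $|f_0'(z)|<\Lambda_0$ and $|f_k'(z)|\leq \Lambda_k$.

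For the lower bound, fix $\rho_3\in(0,r_3)$ and let $z_1,z_2\in\overline{\ID}_{\rho_3}$. In the proof of Theorem \ref{Rohi-Vasu-P4-Theorem-003} we showed that whenever $z_1,z_2$ lie in a disc $\ID_r$ with $r<r_3$,
$$
|F(z_1)-F(z_2)|\geq \psi_2(r)\,|z_1-z_2|, \qquad \psi_2(r)=\frac{\Lambda_0(1-\Lambda_0 r)}{\Lambda_0-r}-\sum_{k=1}^{m-1}(k+1)\Lambda_k r^k.
$$
Choosing $r=\rho_3$ and using that $\psi_2$ is strictly decreasing on $[0,1)$ with $\psi_2(r_3)=0$, we get $l_3:=\psi_2(\rho_3)>0$, and therefore $|F(z_1)-F(z_2)|\geq l_3|z_1-z_2|$.

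For the upper bound, split $F=f_0+\sum_{k=1}^{m-1}\bar z^k f_k$ and apply the triangle inequality:
$$
|F(z_1)-F(z_2)|\leq |f_0(z_1)-f_0(z_2)|+\Bigl|\sum_{k=1}^{m-1}\bar z_1^k f_k(z_1)-\sum_{k=1}^{m-1}\bar z_2^k f_k(z_2)\Bigr|.
$$
Writing $f_0(z_1)-f_0(z_2)=\int_{[z_1,z_2]}f_0'(z)\,dz$ and invoking $|f_0'(z)|<\Lambda_0$ gives $|f_0(z_1)-f_0(z_2)|\leq \Lambda_0|z_1-z_2|$. For the remaining sum, the integration-by-segments estimate \eqref{Rohi-Vasu-P4-equation-005} already established inside the proof of Theorem \ref{Rohi-Vasu-P4-Theorem-003} yields, after replacing $r$ by $\rho_3$,
$$
\Bigl|\sum_{k=1}^{m-1}\bar z_1^k f_k(z_1)-\sum_{k=1}^{m-1}\bar z_2^k f_k(z_2)\Bigr|\leq \Bigl(\sum_{k=1}^{m-1}(k+1)\Lambda_k \rho_3^k\Bigr)|z_1-z_2|.
$$
Adding the two contributions and identifying the constant with $L_3=\Lambda_0+\sum_{k=1}^{m-1}(k+1)\Lambda_k\rho_3^k$ (reading the unsubscripted $\Lambda$ in the statement as $\Lambda_0$, consistent with the hypothesis) completes the proof.

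There is no genuine obstacle here: every ingredient has been assembled in the proofs of Theorem \ref{Rohi-Vasu-P4-Theorem-003} and Lemma \ref{Rohi-Vasu-P4-Lemma-003}. The only point one should be slightly careful about is that the strict inequality $|f_0'(z)|<\Lambda_0$ only gives $|f_0(z_1)-f_0(z_2)|\leq \Lambda_0|z_1-z_2|$ with a non-strict bound after integration, which is exactly what the upper bi-Lipschitz inequality needs. Hence the theorem follows verbatim along the lines of Theorems \ref{Rohi-Vasu-P4-Theorem-004} and \ref{Rohi-Vasu-P4-Theorem-005}.
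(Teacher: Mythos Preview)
Your proposal is correct and follows essentially the same route as the paper: the lower bound is read off from the estimate $|F(z_1)-F(z_2)|\geq \psi_2(\rho_3)|z_1-z_2|$ obtained in the proof of Theorem~\ref{Rohi-Vasu-P4-Theorem-003}, and the upper bound is obtained by the triangle inequality together with $|f_0'(z)|<\Lambda_0$ and the segment estimate \eqref{Rohi-Vasu-P4-equation-005}. Your remark that the unsubscripted $\Lambda$ in $L_3$ should be read as $\Lambda_0$ is exactly right and matches what the paper's own computation needs.
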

\begin{proof}
Fix $\rho_3 \in (0,r_3)$. Then for any $z_1, z_2 \in \overline{\ID}_{\rho_3}$, it follows from the proof of Theorem \ref{Rohi-Vasu-P4-Theorem-003} and $|f_0'(z)|< \Lambda_0$ for $z \in \ID$ that $l_3>0$ and 
\begin{align*}
l_3|z_1-z_2|& \leq |F(z_1)-F(z_2)|\\
& \leq |f_0(z_1)-f_0(z_2)|+\left|\sum_{k=1}^{m-1} \bar{z}_1^k f_k(z_1)-\sum_{k=1}^{m-1} \bar{z}_2^k f_k(z_2)\right|\\
 & \leq \left|\int_{[z_1,z_2]}f_0'(z)dz\right|+|z_1-z_2|\sum_{k=1}^{m-1}\rho_3^k(k+1)\Lambda_k\\
 & \leq \left( \Lambda + \sum_{k=1}^{m-1}\rho_3^k(k+1)\Lambda_k\right) |z_1-z_2|=L_3|z_1-z_2|.
\end{align*}
This completes the proof.

\end{proof}

\noindent\textbf{Acknowledgement:}  The second named author thank CSIR for the support. 
\vspace{1.5mm}

\noindent\textbf{Compliance of Ethical Standards:}\\
\noindent\textbf{Conflict of interest.} The authors declare that there is no conflict  of interest regarding the publication of this paper.
\vspace{1.5mm}

\noindent\textbf{Data availability statement.}  Data sharing is not applicable to this article as no datasets were generated or analyzed during the current study.\vspace{1.5mm}

\noindent\textbf{Authors contributions.} Both the authors have made equal contributions in reading, writing, and preparing the manuscript.

\end{document}